\newtheorem{theorem}{Theorem}
\theoremstyle{definition}
\newtheorem{definition}[theorem]{Definition}
\theoremstyle{remark}
\theoremstyle{remark}
\newtheorem{remark}[theorem]{Remark}
\theoremstyle{theorem}
\newtheorem{lemma}[theorem]{Lemma}
\theoremstyle{theorem}
\newtheorem{conjecture}[theorem]{Conjecture}
\theoremstyle{theorem}
\newtheorem{proposition}[theorem]{Proposition}
\theoremstyle{theorem}
\newtheorem{corollary}[theorem]{Corollary}
\newcommand{\restrict}[2]{\left.#1\right|_{#2}}
\newcommand{\graph}[1]{\text{graph}\,#1}
\newcommand{\class}{\mathcal{C}}
\newcommand{\NN}{\mathbb{N}}
\newcommand{\ZZ}{\mathbb{Z}}
\newcommand{\QQ}{\mathbb{Q}}
\newcommand{\RR}{\mathbb{R}}
\newcommand{\TT}{\mathbb{T}}
\begin{document}

\title[]{On the existence of periodic invariant curves 
for analytic families of twist maps and billiards}

\author{Corentin Fierobe}
\address{Institute of Science and Technology Austria, Am Campus 1, 3400 Klosterneuburg, Austria}
\email{corentin.fierobekoz@gmail.com}

\author{Alfonso Sorrentino}
\address{Department of Mathematics, University of Rome Tor Vergata, Via della ricerca scientifica 1, 00133 Rome, Italy}
\email{sorrentino@mat.uniroma2.it}

\maketitle 

\begin{abstract} 
In this paper we prove that in any analytic one-parameter family of twist maps of the annulus, homotopically invariant curves filled with periodic points corresponding to a given rotation number, either exist for all values of the parameters or at most for a discrete subset. Moreover, we show that the set of analytic twist maps having such an invariant curve of a given rotation number is a strict analytic subset of the set of analytic twist maps. The first result extends, in dimension $2$, a previous result by Arnaud, Massetti and Sorrentino \cite{AMS}. We then apply our result to rational caustics of billiards, considering several models such as Birkhoff billiards, outer billiards and symplectic billiards.
\end{abstract}


\section{Introduction}
The study of invariant manifolds in Hamiltonian systems is a significant area of research, stemming from the pioneering works of Poincaré and subsequent seminal contributions by Kolmogorov, Arnold, and Moser, which  laid the groundwork for what is now known as KAM theory \cite{Kolmogorov, Arnold, Moser}). While the investigation of the existence of invariant manifolds is crucial for understanding the stability of these systems, their destruction holds pivotal importance in elucidating the transition from stability to instability, from integrable regimes to non-integrable ones.

In this article, we focus on twist maps of the annulus and the existence of invariant curves. Of particular interest are invariant curves foliated by periodic points, which emerge as delicate structures within the dynamical landscape. Understanding the essence of this fragility plays a crucial role in trying to tackle several  of the foremost questions and conjectures in dynamics (see for example, recent developments around integrable billiards and Birkhoff's conjecture \cite{Bialy, KaloshinKoudjinan, KaloshinSorrentino, Koval}).

More specifically, we consider real-analytic one-parameter families of exact twist maps (see Definition \ref{definition:general_symplectic_twist_map} for a more precise statement) and investigate the topological structure of the subset of parameters corresponding to maps admitting an invariant curve foliated by periodic points and with a given rational rotation number. 
Our main result can be  summarized as follows:


{\textbf{Main Theorem 1} (see Theorem \ref{theorem:main2} for a more precise statement). \textit{Given an interval $J$ and a real-analytic family of exact symplectic twist maps $(F_{\varepsilon})_{\varepsilon\in J}$, the set of parameters $\varepsilon\in J$ such that $F_{\varepsilon}$ admits an invariant curve filled with periodic points of a given rotation number is either discrete or the whole interval $J$.}}

This result generalizes to arbitrary real-analytic families of twist-maps of the annulus a previous result by Arnaud, Massetti and Sorrentino \cite{AMS}, which considered special families generated by perturbation by a potential, in any dimension.

Actually, one could prove a more general version of Main Theorem 1 that does not involve the parameter $\varepsilon$. In fact, one can show that the set of twist maps having an invariant curve filled with periodic points of a given rotation number has the structure of an analytic set; this means that it is closed and it is locally given by the zeros of an analytic map.

{\textbf{Main Theorem 2} (see Theorem \ref{theorem:main_Banach} for a more precise statement).\textit{ The set of real-analytic exact symplectic twist-maps having an invariant curve filled with periodic points of a given rotation number is a strict analytic subset of the set of real-analytic exact symplectic twist-maps.\\}}

Our main motivation for these results came from billiard dynamics. Therefore,  we apply the above theorems to several billiard models: classical Birkhoff billiards, outer billiards and symplectic billiards.\\

The article is organized as follows:
\begin{itemize}
\item In {\bf Section \ref{secpreliminary}} we provide a more detailed description of the setting, so to state more precisely our main theorems, see \textbf{Theorems \ref{theorem:main2}} and {\bf \ref{theorem:main_Banach}}. \\ Moreover, we present the application of the main theorems  to the various billiard models; more specifically, \textbf{Subsections \ref{subsection:intro_inner_billiard}} for \textit{classical Birkhoff billiards}, \textbf{Subsection \ref{subsection:intro_outer_billiard}} for \textit{outer billiards} and \textbf{Subsection \ref{subsection:intro_symplectic_billiard}} for \textit{symplectic billiards}.
\item In \textbf{Section \ref{section:invariant_graphs}} we recall and prove some properties of periodic invariant graphs.
\item \textbf{Section \ref{section:proof_main}} is devoted to the proof of the Main Theorem (Theorem \ref{theorem:main2}). 
\item In \textbf{Sections \ref{section:proof_inner_billiard}}, \textbf{\ref{section:proof_outer_billiard}} and \textbf{\ref{section:proof_symplectic_billiard}} we discuss the proofs of the results related to the various billiard models.
\item Finally, in \textbf{Section \ref{section:proof_main_Banach}} we prove Main Theorem 2 and its application to Birkhoff Billiards ({\bf Theorems \ref{theorem:main_Banach}} and {\bf \ref{theorem:main_Banach_billiard}}).
\end{itemize}

\section{Acknowledgments}

The authors are grateful to the Simons Center for Geometry and Physics at Stony Brook for its hospitality and support during the program ``{\it Mathematical billiards: at the crossroads dynamics, geometry, analysis, and mathematical physics}'', where part of this project was carried out.\\
AS also acknowledges the support of the Italian Ministry of University and Research’s  PRIN 2022 grant ``{\it Stability in Hamiltonian dynamics and beyond}’', as well as the Department of Excellence grant MatMod@TOV (2023-27) awarded to the Department of Mathematics of University of Rome Tor Vergata. AS is a member of the INdAM research group GNAMPA and the UMI group DinAmicI.\\

\medskip

\section{Preliminaries and statements of the  results} \label{secpreliminary}
In this section we provide a more precise description of the setting and state the main results and their applications to billiard models, in full details.

\subsection{Exact-symplectic twist maps}

On the space $\RR^{2}$ of pairs $(p,q)$, consider $\pi_q,\pi_p:\RR^{2}\to\RR$ the projections, respectively, onto $q$ and $p$.

Consider two continuous $\ZZ$-periodic maps $p^-, p^+:\RR\to\RR\cup\{\pm\infty\}$ and assume that the inequality $p^-(q)<p^+(q)$ is satisfied for any $q\in\RR$. We define the open strip
$$\mathbb A_{p^{\pm}} = \{(q,p)\in\RR^{2}\,|\,p^-(q)<p<p^+(q)\}.$$
It is a bundle over $\RR$, whose fibers are the intervals
$$\restrict{\mathbb A_{p^{\pm}}}{q} = \{p\in\RR\,|\,p^-(q)<p<p^+(q)\}.$$
$\mathbb A_{p^{\pm}}$ projects onto an interval bundle over the torus $\TT^1:=\RR/\ZZ$ having the same fibers. We denote by $\graph(p^-)$ and  $\graph (p^+)$, respectively, the graphs of $p^-$ and $p^+$ in $\RR^{2}$. 


\begin{definition}
\label{definition:general_symplectic_twist_map}
A diffeomorphism $F:  \mathbb A_{p^{\pm}} \longrightarrow \mathbb A_{p^{\pm}}$, where $F(q,p):=(Q(q,p),P(q,p))$, is called an \textit{exact-symplectic twist map} if it satisfies the following properties:
\begin{itemize}
\item[{\bf (i)}] (\textit{Periodicity}) $F(q+m,p)=F(q,p)+(m,0)$ for any $(q,p)\in\mathbb A_{p^{\pm}}$ and $m\in\ZZ$;
\item[{\bf (ii)}]  (\textit{Twist condition}) for any $q\in\RR$ the map 
$$p\in\restrict{\mathbb A_{p^{\pm}}}{q} \longmapsto Q(q,p)$$
is a diffeomorphism onto its image;
\item[{\bf (iii)}] (\textit{Boundary preservation}) For any neighborhood $V$ of $\graph(p^-)\cup \graph(p^+)$ in $\RR^{2}$, there exists another  neighborhood $U$ satisfying
$$F(U\cap\mathbb A_{p^{\pm}})\subseteq V\cap\mathbb A_{p^{\pm}};$$
\item[{\bf (iv)}]  (\textit{Generating function)} There is an open set $\mathcal D\subseteq\RR^{2}$ and a smooth map $S:{\mathcal D}\to\RR$, called \textit{generating function} of $F$, such that for any $(q,Q)\in{\mathcal D}$ and $m\in \ZZ$:
\begin{eqnarray*}
&& \quad (q,Q)\in{\mathcal D} \quad \Longrightarrow\quad (q+m,Q+m)\in{\mathcal D} \\
&& \quad S(p+m,Q+m) = S(q,Q),
\end{eqnarray*}
and 
$$PdQ-pdq=dS(q,Q).$$
\end{itemize}
\end{definition}

\medskip

\begin{remark}
Given an exact symplectic twist map $F$, the map $p\in\restrict{\mathbb A_{p^{\pm}}}{q} \mapsto Q(q,p)$ is either strictly increasing for any $q$, or strictly decreasing for any $q$. In the first case, we say that $F$ is \textit{positive}, in the second one that it is \textit{negative}. \\
Note also that since 
$\partial_{12}^2 S = -(\partial_p Q)^{-1}$ \cite[Formula (9.2.4)]{KatokHasselblatt}, we observe that $\partial_{12}^2 S<0$ if $F$ is positive, and $\partial_{12}^2 S>0$ if $F$ is negative. In the proofs, we will often assume that $F$ is positive to simplify the redaction, since the proofs in the negative case are analogous.
\end{remark}

\begin{remark}
An exact symplectic twist map $F$ induces a map $f$ from the tangent bundle $T\TT^1\simeq\TT^1\times\RR$ to itself. More precisely, if $\pi:\RR^2\to\TT^1\times\RR$ is the canonical projection, then $f:\pi(\mathbb A_{p^{\pm}})\to\pi(\mathbb A_{p^{\pm}})$ is defined by $f\circ\pi = \pi\circ F.$ 
In order to ease notation, we will use the same notations for both maps.
\end{remark}

\subsection{Periodic and invariant graphs}

A \textit{rotational invariant curve} of a symplectic twist map $F:\mathbb A_{p^{\pm}}\to\mathbb A_{p^{\pm}}$ is a curve $\Gamma\subset \mathbb A_{p^{\pm}}$ such that $F(\Gamma)=\Gamma$ and   $\mathbb A_{p^{\pm}} \setminus \Gamma$  consists of two connected components. 

\begin{remark} \label{remark:Birkhoff_theorem} 
A famous theorem by Birkhoff (see for example \cite[Theorem 15.1]{ForniMather}) states that any rotational invariant curve $\Gamma$, if it exists, is the graph of a Lipschitz continuous $1$-periodic map $\gamma:\RR\to\RR$. Moreover, the Lipschitz constant of $\gamma$ only depends on $\inf_{\Gamma} \partial_pQ>0$ (see \cite[Theorem 15.1]{ForniMather},  \cite[Lemma 13.1.1]{KH_book}  and \cite[Proposition 12.3]{Gole} for more details). 
\end{remark}

This leads to the following definition.

\begin{definition}
Let $F:\mathbb A_{p^{\pm}}\to\mathbb A_{p^{\pm}}$ be an exact-symplectic twist map, $\Gamma=\text{graph}(\gamma)\subset\mathbb A_{p^{\pm}}$ be the graph of a $1$-periodic Lipschitz-continuous map $\gamma:\RR\to\RR$, and $m \in \ZZ ,n\in\NN^*$ coprime. We say that $\Gamma$ is
\begin{itemize}
\item[{\bf (i)}] \textit{$(m,n)$-periodic} if $F^n(q,\gamma(q))=(q+m,\gamma(q))$ for any $q\in\RR$;
\item[{\bf (ii)}] \textit{invariant by} $F$ if $F(\Gamma)\subseteq\Gamma$;
\item[{\bf (iii)}] \textit{$\class^k$-smooth} (resp. {\it analytic}) if $\gamma$ is a $\class^k$-smooth (resp. analytic).
\end{itemize}
\end{definition}

We show in Proposition \ref{proposition:equivalence_invariance_minimizing} that $(m,n)$-periodic graphs are automatically invariant by $F$ and have the same regularity as $F$.


\subsection{Twist interval} Given an $(m,n)$-periodic invariant graph $\Gamma$ of $F$, the restrictions of $\restrict{F}{\Gamma}$ can be seen as {the lift of a diffeomorphism of $\TT^1$} whose rotation number is $m/n$. We introduce the set of all possible rotation numbers of orbits of $F$, and call it \textit{twist interval} of $F$. It is defined as follows (see also \cite[Definition 9.3.2]{KH_book}):

\begin{definition} 
The \textit{twist interval} of a symplectic twist map $F$ is the set  $\text{TI}(F)$ of numbers $\alpha\in\RR$ for which there is a neighborhood $U^-$ of $\graph(p^-)$ and a neighborhood $U^+$ of $\graph(p^+)$ in $\RR^{2}$ such that for $(q,p)\in\mathbb A_{p^{\pm}}$ 
$$(q,p)\in U^-\;\Longrightarrow\;\pi_q\circ F(q,p)-q\leq \alpha$$ 
and 
$$(q,p)\in U^+\;\Longrightarrow\;\pi_q\circ F(q,p)-q \geq \alpha.$$
\end{definition}

\medskip

\begin{remark}
{\bf (i)} The twist interval is by construction an open interval of $\RR$. For example, given $\varepsilon\in\RR$, the map $F_{\varepsilon}:\RR\times(0,1)\to\RR\times(0,1)$ defined for all $(q,p)\in\RR\times(0,1)$ by
$F(q,p)=(q+p+\varepsilon,p)$ is an exact symplectic twist map whose twist interval is $\text{TI}(F_{\varepsilon})=(\varepsilon,1+\varepsilon).$\\
{\bf (ii)}
Another example can be given in the case of the usual billiard map (this will discussed in more details in Section \ref{subsection:intro_inner_billiard}). Given a strictly convex domain $\Omega$ (fix an orientation of its boundary $\partial \Omega$), the billiard map $F_{\Omega}:\RR\times(-1,1)\to\RR\times(-1,1)$ is given by $F_{\Omega}(q,-\cos\varphi)=(q_1,-\cos\varphi_1)$ where $(q_1,\varphi_1)$ is the pair describing the point of impact and the angle of reflection after the bounce of a trajectory coming from $q$ and making an angle $\varphi$ with the (oriented) tangent vector at $\Omega$ in $q$. The twist interval of $F_{\Omega}$ is given by $TI(F_{\Omega}) = (0,1)$: indeed, when $\varphi$ goes from $0$ to $\pi$, the point of impact $q_1$ moves along the boundary $\partial\Omega$ from $q$ to itself, winding exactly once around $\partial\Omega$.
\end{remark}


\subsection{Main theorem 1}

We now consider  {analytic} one-parameter families of symplectic twist maps. More specifically, consider an interval $I\subset\RR$ and continuous maps $p^-,p^+:I\times\RR\to\RR\cup\{\pm\infty\}$ that are $1$-periodic in the second component and such that
for any $(\varepsilon,q)\in I\times\RR$ the inequality $p^-(\varepsilon,q)<p^+(\varepsilon,q)$.  \\
One can introduce the open set
$$\mathbb A_{I,p^{\pm}} := \{(\varepsilon,q,p)\in I\times\RR^{2}\,|\,p^-(\varepsilon,q)<p<p^+(\varepsilon,q)\}$$
and denote its closure by
$$\overline{\mathbb A}_{I,p^{\pm}} = \{(\varepsilon,q,p)\in I\times\RR^2\,|\,p^-(\varepsilon,q)\leq p\leq p^+(\varepsilon,q)\}.$$
Given $\varepsilon\in I$, we denote its {$\varepsilon$-}section by
$$\mathbb A_{I,p^{\pm}}^{\varepsilon} := \{(q,p)\in \RR^{2}\,|\,p^-(\varepsilon,q)<p<p^+(\varepsilon,q)\}.$$

\medskip


\begin{theorem}[Main Theorem 1]
\label{theorem:main2}
Assume that $I\subset\RR$ is {an interval} and $(m,n)\in\ZZ\times\NN^{\ast}$ are coprime. Suppose that for any $\varepsilon\in I$ we are given an exact symplectic twist map $F_{\varepsilon}$ such that:
\begin{itemize}
\item[{\bf (i)}]  the map $(\varepsilon,q,p)\in \mathbb A_{I,p^{\pm}}\mapsto F_{\varepsilon}(q,p)$ is analytic;
\item[{\bf (ii)}] $m/n \in TI(F_{\varepsilon})$ for every $\varepsilon\in I$.
\end{itemize}
Then,  the set 
$$\mathcal{I}_{(m,n)}(\RR):=\left\{\varepsilon\in I\,|\, F_{\varepsilon} \text{ has an }(m,n)\text{-periodic invariant graph}\right\}$$
{is either discrete or consists of the whole $I$}. In particular, if $I$ is compact, then $\mathcal{I}_{(m,n)}(\RR) = I$ or it is at most finite.\\
\end{theorem}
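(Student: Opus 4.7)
The plan is to realize the set $\mathcal{I}_{(m,n)}(\RR)$ locally around $\varepsilon_0 \in \mathcal{I}_{(m,n)}(\RR)$ as the common zero set of a countable family of real-analytic functions of $\varepsilon$, then invoke the dichotomy that the zero set of a non-identically-vanishing real-analytic function on an interval is discrete. As a preliminary, I would establish that $\mathcal{I}_{(m,n)}(\RR)$ is closed in $I$: if $\varepsilon_k \to \varepsilon_0$ with $F_{\varepsilon_k}$ admitting invariant graphs of Lipschitz functions $\gamma_k$, Birkhoff's theorem (Remark \ref{remark:Birkhoff_theorem}) together with the uniform twist condition near $\varepsilon_0$ yields uniform Lipschitz bounds; Arzelà--Ascoli then produces a limit whose graph is an invariant $(m,n)$-periodic graph of $F_{\varepsilon_0}$.

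For the local analytic model, fix $\varepsilon_0 \in \mathcal{I}_{(m,n)}(\RR)$ with invariant graph of $\gamma_0$, analytic by Proposition \ref{proposition:equivalence_invariance_minimizing}. Choose a point $(q_0^*, \gamma_0(q_0^*))$ on the graph and let $(q_0^*, q_1^*, \ldots, q_{n-1}^*)$ denote the $q$-coordinates of its $F_{\varepsilon_0}$-orbit. Consider the jointly real-analytic discrete action
\[
A_\varepsilon^{m,n}(q_0, \ldots, q_{n-1}) := \sum_{k=0}^{n-1} S_\varepsilon(q_k, q_{k+1}), \qquad q_n := q_0 + m,
\]
whose critical points are in bijection with $(m,n)$-periodic orbits of $F_\varepsilon$. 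The sub-Hessian $H := (\partial^2 A_{\varepsilon_0}^{m,n}/\partial q_i \partial q_j)_{i,j=1}^{n-1}$ at the base orbit is tridiagonal with off-diagonal entries $\partial^2_{12} S_{\varepsilon_0}$ of constant sign (twist condition), and is invertible because the orbit is a minimizer of $A_{\varepsilon_0}^{m,n}$, being on an analytic invariant graph (Proposition \ref{proposition:equivalence_invariance_minimizing}); if $H$ happens to be singular at a particular point, one moves to another point of $\Gamma_0$, since by analyticity the singular locus is either discrete or all of $\Gamma_0$ (the latter being easily excluded). The implicit function theorem then yields analytic functions $q_1(\varepsilon, q_0), \ldots, q_{n-1}(\varepsilon, q_0)$ on a neighborhood $U \times J$ of $(\varepsilon_0, q_0^*)$ that solve $\partial A_\varepsilon^{m,n}/\partial q_i = 0$ for $i = 1, \ldots, n-1$.

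Next, introduce the real-analytic obstruction function
\[
\Psi(\varepsilon, q_0) := \frac{\partial A_\varepsilon^{m,n}}{\partial q_0}\bigl(q_0, q_1(\varepsilon, q_0), \ldots, q_{n-1}(\varepsilon, q_0)\bigr),
\]
whose zeros correspond to $(m,n)$-periodic orbits of $F_\varepsilon$ close to the base orbit; by hypothesis $\Psi(\varepsilon_0, \cdot) \equiv 0$ on $J$. Writing $\Psi(\varepsilon, q_0) = \sum_{k \geq 0} \phi_k(\varepsilon)(q_0 - q_0^*)^k$ with each $\phi_k$ analytic on $U$ and $\phi_k(\varepsilon_0) = 0$, two possibilities arise: either some $\phi_k$ is not identically zero on $U$, in which case its zero set, and hence $\mathcal{I}_{(m,n)}(\RR) \cap U$, is discrete; or every $\phi_k \equiv 0$ on $U$, so $\Psi \equiv 0$ on $U \times J$ and every $F_\varepsilon$ with $\varepsilon \in U$ admits an $(m,n)$-periodic orbit through every $q_0 \in J$.

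In the second case, to upgrade from an arc of orbits to a full invariant graph, I would cover the compact image of $\Gamma_0$ in $\TT^1 \times \RR$ by finitely many charts centered at base points where $H$ is invertible, repeat the construction in each, and take the intersection $U^* := \bigcap_j U_j$; on overlaps the local solutions coincide by uniqueness in the implicit function theorem, assembling into analytic $(m,n)$-periodic invariant graphs of $F_\varepsilon$ for every $\varepsilon \in U^*$. Hence the set of non-isolated points of $\mathcal{I}_{(m,n)}(\RR)$ is open; being also closed (a limit of accumulation points is an accumulation point), the connectedness of $I$ forces it to be either empty or all of $I$, and combined with the closedness of $\mathcal{I}_{(m,n)}(\RR)$ this yields the claimed dichotomy. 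The main technical difficulty is justifying the invertibility of the sub-Hessian $H$ in spite of the inherent degeneracy of the \emph{full} Hessian of $A_{\varepsilon_0}^{m,n}$ along $\Gamma_0$; the crucial input here is the minimization property of orbits on analytic invariant graphs furnished by Proposition \ref{proposition:equivalence_invariance_minimizing}.
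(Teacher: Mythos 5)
Your proposal is correct and reaches the conclusion by a route parallel to, but genuinely distinct from, the paper's. Where the paper works directly with the twist map itself, setting up $\Delta_1(\varepsilon,q,p)=\pi_q\circ F^n_\varepsilon(q,p)-q-m$ (whose $p$-derivative is nonzero by absence of conjugate points) and then the scalar obstruction $\Delta_2(\varepsilon,q)=\pi_p\circ F^n_\varepsilon(q,\eta_\varepsilon(q))-\eta_\varepsilon(q)$, you work variationally with the discrete action $A^{m,n}_\varepsilon$ in the configuration variables $(q_0,\ldots,q_{n-1})$, solve out the inner variables via the invertibility of the sub-Hessian $H$, and package the obstruction into $\Psi$ and its Taylor coefficients $\phi_k(\varepsilon)$. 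These are two equivalent implicit-function-theorem reductions; note that the non-degeneracy you need is \emph{exactly} the no-conjugate-points property of minimizing configurations guaranteed by Proposition \ref{proposition:equivalence_invariance_minimizing}, so the fallback argument about moving the base point $q_0^*$ is unnecessary — $H$ is invertible at every point of $\Gamma_0$. The final topological step also differs: the paper first proves (Lemma \ref{lemma:non_empty_interior}) that $\mathcal I_{(m,n)}(\RR)$ has empty interior or equals $I$, by pushing past $\beta=\sup A$ for a connected component $A$ of non-trivial interior, and then separately treats accumulation points via flatness of $\Delta_2$; you argue in one stroke that the set of non-isolated points of $\mathcal I_{(m,n)}(\RR)$ is both open (by the IFT-plus-covering argument) and closed in $I$, and invoke connectedness once. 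Your topological closing is arguably cleaner; the paper's reduction via $\Delta_1,\Delta_2$ is somewhat more streamlined than the $n$-variable action.

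One step should be made explicit. In the first alternative of your dichotomy you assert that $\mathcal I_{(m,n)}(\RR)\cap U$ is contained in the zero set of $\phi_k$, i.e.\ that every $\varepsilon\in\mathcal I_{(m,n)}(\RR)$ near $\varepsilon_0$ satisfies $\Psi(\varepsilon,\cdot)\equiv 0$ on $J$. This requires knowing that the $(m,n)$-periodic orbit of $F_\varepsilon$ passing over $q_0\in J$ (the one lying on $\gamma_\varepsilon$) is the one captured by your local implicit-function solution $(q_1(\varepsilon,q_0),\ldots,q_{n-1}(\varepsilon,q_0))$, which is only guaranteed if $\gamma_\varepsilon$ is uniformly close to $\gamma_{\varepsilon_0}$. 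You therefore need the continuity of $(\varepsilon,q)\mapsto\gamma_\varepsilon(q)$ on $\mathcal I_{(m,n)}(\RR)\times\RR$ (Lemma \ref{lemma:extension_lemma27}(ii) in the paper), which follows from the same Arzelà--Ascoli compactness plus the uniqueness of the $(m,n)$-periodic invariant graph you already invoke, but is a separate assertion from closedness and should be stated and cited where you draw this conclusion. With that addition the argument is complete.
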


\begin{remark}
In the statement of Theorem \ref{theorem:main2}, we do not need to precise the regularity of invariant graphs. In fact, it follows from Proposition \ref{proposition:equivalence_invariance_minimizing} that they are indeed {analytic}.  \\
\end{remark}

\subsection{Main Theorem 2}
\label{subsection:main_Banach}
Theorem \ref{theorem:main2} can be generalized without involving the parameter $\varepsilon$. 
 Let us define the set $\mathscr T^{\omega}$ of analytic twist-maps of $\mathbb A_{p^{\pm}}$ which admit a continuous extension to the closure $\overline{\mathbb A_{p^{\pm}}}$. It is an open subset of the Banach set of analytic maps of $\mathbb A_{p^{\pm}}$ to itself admitting a continuous extension to $\overline{\mathbb A_{p^{\pm}}}$.

Before stating our  result, let us recall the definition of analytic subset of a Banach space. We refer the reader to \cite{Whittlesey} for more details about this topic.

\begin{definition}\label{defbanach}
Let $U$ be an open set of a Banach space $E$ and let $X\subset U$ be a closed subset. 
\begin{itemize}
\item $U$ is said an \textit{analytic subset} of $U$ if for any $x\in U$, there is an analytic map $f_x:V_x\to F$ defined on a neighborhood $V_x\subset U$ of $x$ to a Banach space $F$ and such that $X\cap V_x=f_x^{-1}(0)$. 
\item $X$ is said to be a \textit{strict analytic subset}  if $X\neq U$. 
\end{itemize}
\end{definition}

\begin{remark}
If $U$ is connected, strict analytic subsets of $U$ must have empty interior.
\end{remark}

We can now state our result.

\begin{theorem}[Main Theorem 2]
\label{theorem:main_Banach}
Let $m/n\in\QQ$. The subset of twist-maps of $\mathscr T^{\omega}$ having an $(m,n)$-periodic invariant graph is a strict analytic subset of $\mathscr T^{\omega}$.\\
\end{theorem}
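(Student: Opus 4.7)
The plan is to verify the two requirements of Definition \ref{defbanach} for the subset $X := \{F\in\mathscr T^\omega : F \text{ admits an } (m,n)\text{-periodic invariant graph}\}$: locally represent $X$ as the zero set of an analytic map to a Banach space around every point of $\mathscr T^\omega$, and exhibit one twist-map outside $X$.

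A preliminary step is to show that $X$ is closed in $\mathscr T^\omega$. If $F_k\to F$ in the analytic topology and each $F_k$ has an invariant graph $\gamma_k$, Birkhoff's theorem (Remark \ref{remark:Birkhoff_theorem}) yields uniform Lipschitz bounds on the $\gamma_k$, so an Arzelà--Ascoli argument extracts a limit graph $\gamma$ which is still $(m,n)$-periodic and invariant under $F$, and Proposition \ref{proposition:equivalence_invariance_minimizing} upgrades it to analytic. Given closedness, for any $F_0\notin X$ I would simply pick a neighborhood $V_{F_0}$ disjoint from $X$ and let $\Phi_{F_0}$ be any nonzero constant map into a Banach space.

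The substance of the argument concerns the case $F_0\in X$, with associated analytic graph $\gamma_0$. Fix a strip width $\rho>0$ so that $\gamma_0$ extends holomorphically to $\{|\im q|<\rho\}$, and let $E := C^\omega_\rho(\TT^1)$ denote the Banach space of $1$-periodic analytic functions on this strip. Consider the analytic operator
\begin{equation*}
T: V_{F_0}\times U_{\gamma_0} \longrightarrow E\times E, \qquad T(F,\gamma)(q) := F^n(q,\gamma(q)) - (q+m,\gamma(q)),
\end{equation*}
so that $T(F_0,\gamma_0)=0$ and zeros of $T$ parametrize invariant graphs. Writing $T = (T_1,T_2)$, the partial derivative $\partial_\gamma T_1(F_0,\gamma_0)$ acts on $\eta\in E$ by multiplication by the function $\partial_p\pi_q F_0^n(q,\gamma_0(q))$. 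This function is nowhere zero: differentiating the invariance relation shows that $DF_0^n(q,\gamma_0(q))$ admits $(1,\gamma_0'(q))$ as a $1$-eigenvector, and since $\det DF_0^n=1$ the other eigenvalue is also $1$, forcing $DF_0^n$ to be either a nontrivial Jordan block (in which case a quick algebraic check shows $\partial_p\pi_q F_0^n\neq 0$) or the identity (a degenerate case treated separately). The analytic implicit function theorem then provides an analytic map $F\mapsto\gamma(F)$ with $T_1(F,\gamma(F))=0$ near $F_0$, and the sought local equation is
\begin{equation*}
\Phi_{F_0}(F) := T_2(F,\gamma(F)) = \pi_p F^n(\cdot,\gamma(F)(\cdot)) - \gamma(F)(\cdot) \in E.
\end{equation*}
After shrinking $V_{F_0}$, any invariant graph of an $F \in V_{F_0}\cap X$ must, by the Lipschitz/Arzelà--Ascoli control, lie close to $\gamma_0$ and hence coincide with $\gamma(F)$ by the uniqueness clause of the implicit function theorem, yielding $X\cap V_{F_0} = \Phi_{F_0}^{-1}(0)$.

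Strictness is ensured by exhibiting a single analytic twist map with no $(m,n)$-periodic invariant graph; for instance a standard-map-type perturbation of an integrable twist, whose failure to possess such a graph can be verified directly or deduced along a one-parameter family through Theorem \ref{theorem:main2}. The main technical obstacle I anticipate is the degenerate case $DF_0^n\equiv I$ along $\gamma_0$, in which $\partial_\gamma T_1$ is no longer invertible; one would then either rule out this rigid case via the positive-twist condition iterated $n$ times, or replace the implicit function theorem step with a Lyapunov--Schmidt reduction that folds the singular direction into a finite-dimensional part of the defining analytic equation.
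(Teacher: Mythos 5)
Your overall strategy matches the paper's: closedness via uniform Lipschitz bounds and Arzelà--Ascoli, then for $F_0$ with graph $\gamma_0$ an implicit function theorem to express the nearby candidate graph and encode membership in $X$ as the vanishing of the residual $\pi_p$-equation. The genuine gap is in how you justify the invertibility of the linearization, i.e.\ the nondegeneracy $\partial_p(\pi_q\circ F_0^n)(q,\gamma_0(q))\neq 0$. Your eigenvalue/Jordan-block argument correctly reduces this to excluding $DF_0^n(q,\gamma_0(q))=I$ somewhere along $\gamma_0$, but neither of your proposed remedies for that degenerate case is sound: the twist condition on $F$ does not iterate to a twist condition on $F^n$ (for $n\geq 2$, $F^n$ is generally not a twist map, so ``positive twist iterated $n$ times'' is not a valid tool), and a Lyapunov--Schmidt reduction would be needlessly heavy machinery. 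The paper closes the gap outright by invoking Proposition~\ref{proposition:equivalence_invariance_minimizing}(ii): the orbits through an $(m,n)$-periodic graph project to \emph{minimal} configurations and therefore have no conjugate points, which is exactly the statement $\partial_p(\pi_q\circ F^n)\neq 0$ and hence rules out $DF^n=I$ along $\gamma_0$ without any case analysis. You already cite this very proposition for the closedness step; use item~(ii) of it at the critical point instead of attempting a fresh spectral argument.

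A secondary, benign difference: the paper applies the implicit function theorem to solve $\pi_q\circ F^n(q,p)=q+m$ \emph{pointwise} for $p$ as a function of $(F,q)\in V\times\RR$, yielding $\overline p:V\times\RR\to\RR$, rather than solving for the full graph $\gamma$ as an element of a Banach function space $E=C^\omega_\rho(\TT^1)$ as you do. Both routes lead to the same analytic residual map $\varphi(F)=\pi_p\circ F^n(\cdot,\overline p_F(\cdot))-\overline p_F(\cdot)$, but the pointwise version sidesteps the mild bookkeeping you would otherwise need to check that the composition with $F^n$ preserves a fixed strip of analyticity width $\rho$. Finally, note that neither you nor the paper gives a fully explicit witness for strictness; but once closedness and the analytic local description are in place, strictness is immediate from the existence of a single twist map with, e.g., only hyperbolic $(m,n)$-periodic orbits (or with $m/n$ outside its twist interval), so this is a side remark rather than a substantive gap.
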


\begin{remark}\label{remaftermainthm2}
One could actually deduce the proof of Theorem \ref{theorem:main2} using the statement of Theorem \ref{theorem:main_Banach}. In fact, if $\varphi$ is the analytic map $\varphi:\varepsilon\in I\mapsto F_{\varepsilon}\in \mathscr T^{\omega}$ and $\mathscr T^{\omega}_{m,n}$ is the subset of twist-maps of $\mathscr T^{\omega}$ having an $(m,n)$-periodic invariant graph, then $\mathcal I_{(m,n)}(\RR)$ is simply $\mathcal I_{(m,n)}(\RR)=\varphi^{-1}(\mathscr T^{\omega}_{m,n})$, hence is an analytic subset of $I$. It follows that it is either the whole $I$ or it is discrete (this property on analytic subsets of $\RR$ holds also with our definition of analytic subset). However, we decided to provide an independent proof of Theorem \ref{theorem:main2}, which is closer to our initial motivations.\\
\end{remark}

\subsection{Application to Birkhoff billiards}
\label{subsection:intro_inner_billiard}
A classical (planar) billiard is a bounded domain $\Omega\subset\RR^2$ with (piecewise) smooth boundary, in which one can study the behaviour of an infinitely small particle evolving inside $\Omega$ without friction. When reaching the boundary, the particle bounces on it according to the usual reflection law of geometrical optics: \textit{the angle of incidence equals the angle of reflection}. We refer the interested reader to \cite{KozlovTreshchev,TabachnikovBook} for more details on billiards, and to \cite[Chapter 3]{Siburg} for an overview on rigidity questions in billiards.

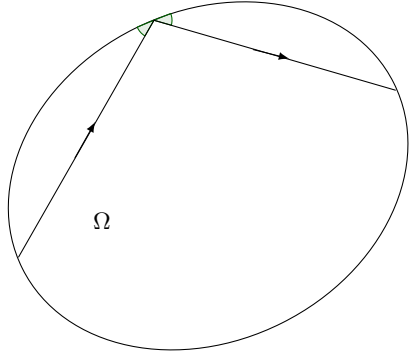
\begin{figure}[!h]
\definecolor{qqwuqq}{rgb}{0,0.39,0}
\definecolor{uququq}{rgb}{0.25,0.25,0.25}
\definecolor{qqqqff}{rgb}{0,0,1}
\definecolor{xdxdff}{rgb}{0.49,0.49,1}
\definecolor{cqcqcq}{rgb}{0.75,0.75,0.75}
\begin{tikzpicture}[line cap=round,line join=round,>=triangle 45,x=3.0cm,y=3.0cm]
\clip(-1.4,-0.6) rectangle (1,1.4);
\draw [rotate around={26.57:(0,0.25)}] (0,0.25) ellipse (2.76cm and 2.19cm);
\draw [shift={(-0.24,0.94)},color=qqwuqq,fill=qqwuqq,fill opacity=0.1] (0,0) -- (-157.88:0.08) arc (-157.88:-119.9:0.08) -- cycle;
\draw [shift={(-0.24,0.94)},color=qqwuqq,fill=qqwuqq,fill opacity=0.1] (0,0) -- (-15.81:0.08) arc (-15.81:22.12:0.08) -- cycle;
\draw (-0.84,-0.11)-- (-0.24,0.94);
\draw (-0.24,0.94)-- (0.83,0.63);
\draw [-latex] (-0.59,0.33) -- (-0.5,0.49);
\draw [-latex] (0.2,0.81) -- (0.36,0.77);
\begin{scriptsize}
\draw[color=black] (-0.47,0.05) node {$\Omega$};
\draw[color=black] (-0.28,1.02) node {$ $};
\end{scriptsize}
\end{tikzpicture}
\caption{The classical reflection law of a particle inside a strictly convex domain $\Omega$ with smooth boundary.}
\end{figure}

Let us consider the case when $\Omega$ is strictly convex with a smooth boundary, which defines a so-called \textit{Birkhoff billiard}. The dynamics of a particle in $\Omega$ is described by a discrete map, \textit{the billiard map}, acting on the space of oriented lines intersecting $\Omega$: given  a line $\ell$, the billiard map associates to it the line $\ell'$ naturally obtained by reflecting $\ell$ at the point of impact with $\partial\Omega$ (see Figure 1). The phase space is a cylinder, which can be parametrized by pairs $(s,\varphi)\in\RR/|\partial\Omega|\ZZ\times[0,\pi]$, where $s$ is an arc-length coordinate on the boundary $\partial\Omega$ and $\varphi$ is the angle between the tangent line of $\Omega$ at $s$ and the corresponding oriented line starting at $s$.

As for dynamical systems in general, one can study the so-called \textit{integrable billiards}: billiards whose phase space contains an open set foliated by curves which are invariant by the billiard map. Circles are examples of such billiards for which the whole phase space is foliated by invariant curves -- in this case we speak about \textit{globally} integrable billiards. Let us mention also the case of ellipses, which are integrable, but not globally integrable. A famous conjecture, due to Birkhoff \cite{Birkhoff} and Poritsky \cite{Poritsky}, states that

\begin{conjecture}[Birkhoff-Poritsky]
The only integrable billiards are ellipses. 
\end{conjecture}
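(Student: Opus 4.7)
The plan is to use the rigidity encoded in Main Theorem 2 to attack the Birkhoff--Poritsky conjecture. If $\Omega$ is an integrable billiard, by definition its phase cylinder contains an open set $U\subset\TT^1\times(0,\pi)$ foliated by $F_\Omega$-invariant curves. Each such curve is a rotational invariant graph and carries a well-defined rotation number, and as these numbers vary continuously over the foliation they sweep an open sub-interval $J\subset \mathrm{TI}(F_\Omega)=(0,1)$. Hence there is a dense set of coprime pairs $(m,n)$ with $m/n\in J$ for which $\Omega$ admits an $(m,n)$-periodic invariant graph, namely the leaf through the corresponding rational rotation number.

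First, I would translate this into the analytic framework of Theorem~\ref{theorem:main_Banach}. For each such coprime $(m,n)$, let $\mathscr T^{\omega}_{m,n}\subset\mathscr T^{\omega}$ denote the strict analytic subset of twist maps admitting an $(m,n)$-periodic invariant graph. Then $F_\Omega$ lies in the countable intersection
\[
\bigcap_{m/n\in J\cap\QQ}\mathscr T^{\omega}_{m,n},
\]
and ellipses lie in this intersection as well. The strategy is then to argue, via some form of analytic rigidity, that this intersection cannot be strictly larger than the finite-dimensional family of ellipses (up to similarities): one would hope that each additional rational condition kills one direction in the space of domains, and that accumulation forces $\Omega$ to be elliptic. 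A companion idea, using Main Theorem 1 directly, is to take an analytic one-parameter deformation $\varepsilon\mapsto\Omega_\varepsilon$ of a fixed ellipse $\Omega_0$: for each $m/n\in J\cap\QQ$ the discreteness/persistence dichotomy of Theorem~\ref{theorem:main2} applies, and one would like to conclude that the only way all rational conditions can persist along the deformation is for the deformation to stay within the ellipse family.

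The hard part, and the reason this remains a central open problem in the area, is precisely this last step. Main Theorem 2 only says that each single $\mathscr T^{\omega}_{m,n}$ is a strict analytic set; it gives no control on its codimension, nor any information on the geometry of the intersection of infinitely many such sets inside the infinite-dimensional Banach space $\mathscr T^{\omega}$. A countable intersection of strict analytic subsets can a priori still be very large, so extra input is needed. Natural ways to make progress are to combine the statement with perturbative KAM-type arguments near the ellipse (as in Avila--De~Simoi--Kaloshin or Kaloshin--Sorrentino), with Bialy-type global identities on rational caustics, or with Mather $\beta$-function rigidity; in full generality, however, resolving the conjecture is beyond what Main Theorem~2 alone can provide, and one should expect the paper to use its results only to extract partial structural consequences about the set of billiards possessing rational caustics.
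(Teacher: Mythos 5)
You have correctly recognized that this statement is a \emph{conjecture}, not a theorem: the paper states the Birkhoff--Poritsky conjecture without proof and, indeed, explicitly remarks after Theorem~\ref{theorem:birkhoff_billiard} that ``this result does not answer Birkhoff's conjecture, but rather confirms how fragile integrability is.'' There is therefore no ``paper proof'' to compare against, and your attempt rightly does not claim to supply one.

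Your analysis of the obstruction is accurate and worth keeping in mind: Main Theorem~2 shows that each $\mathscr T^{\omega}_{m,n}$ is a strict analytic subset of $\mathscr T^{\omega}$, but it provides no codimension bound and no control over the countable intersection $\bigcap_{m/n}\mathscr T^{\omega}_{m,n}$, which in an infinite-dimensional Banach space can still be far larger than the finite-dimensional family of ellipses. Likewise, the dichotomy of Main Theorem~1 applied to an analytic deformation of an ellipse only shows that, for each fixed rational $m/n$, the set of parameters with an $(m,n)$-caustic is discrete or everything; it does not show that ``everything for all $m/n$ simultaneously'' forces the deformation to remain elliptic. Closing this gap is precisely the content of the local results of Kaloshin--Sorrentino and Koval that the paper cites, and of Bialy's total-integrability theorem; the present paper contributes only the structural statement that the rational-caustic locus is an analytic set and that its intersection with an analytic one-parameter family is discrete or full. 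Your write-up should therefore be read as a correct diagnosis of why the conjecture is not resolved here, not as a proof.
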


Bialy \cite{Bialy} showed that the only globally integrable billiards are circles. Kaloshin-Sorrentino \cite{KaloshinSorrentino} proved that the only billiards close to ellipses having invariant curves of rotation number $1/q$ for any $q\geq 2$ are ellipses. Later, Koval \cite{Koval} extended this result to billiards close to generic ellipses having invariant curves of rotation number $r=p/q$, for any $r$ lower than an arbitrarily small bound of the form $1/q_0$. The \textit{rotation number} of an invariant curve is defined as the rotation number of the circle map obtained by restricting the billiard map to the corresponding curve.

In the case of non-rational rotation number, Lazutkin \cite{Lazutkin_KAM} showed that there is a Cantor set $C\subset[0,1]$ of non-zero measure accumulating to $0$ such that each $\omega\in C$ is the rotation number of an invariant curve. Popov showed \cite{Popov} that these curve persists under a small deformation of the billiard. However the rotation numbers considered in these results are far from being rational: they are so-called Diophantine numbers, which are numbers badly approximated by rationals.

In the case of rational rotation numbers, it is expected that corresponding invariant curves are more fragile. 

The main result in this paper allows us to prove the following result.

\begin{theorem}[Invariant curves in families of Birkhoff billiards]
\label{theorem:birkhoff_billiard}
Let $I$ be an interval and $(\Omega_{\varepsilon})_{\varepsilon\in I}$ be an analytic family of strictly convex analytic domains. Then given a pair $(m,n)\in\ZZ\times\NN^{\ast}$ of coprime integers such that $m/n\in(0,1)$, the set of $\varepsilon\in I$ such that the billiard map inside $\Omega_{\varepsilon}$ has an $(m,n)$-periodic invariant curve is either discrete or consists of the whole $I$. 
\end{theorem}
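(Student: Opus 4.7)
The strategy is to reduce Theorem \ref{theorem:birkhoff_billiard} to Main Theorem 1 (Theorem \ref{theorem:main2}) by recasting the family of billiard maps as an analytic one-parameter family of exact symplectic twist maps on a common phase cylinder. The bulk of the work is to produce that family together with a common generating function depending analytically on $\varepsilon$; once this is done, the conclusion is immediate.

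The first step is to choose an analytic common boundary parametrization. Using the analyticity of $(\Omega_\varepsilon)$, one selects a map $\gamma : I\times\RR \to \RR^2$ real-analytic in both variables, $1$-periodic and regular in the second, with $\gamma(\varepsilon, \cdot)$ parametrizing $\partial\Omega_\varepsilon$. In coordinates $(q, \varphi) \in \RR/\ZZ \times (0,\pi)$ adapted to this parametrization, the billiard map $(q_0, \varphi_0) \mapsto (q_1, \varphi_1)$ is defined implicitly by the reflection law at $\gamma(\varepsilon, q_0)$ and $\gamma(\varepsilon, q_1)$. Strict convexity excludes tangential intersections, so the implicit function theorem yields joint real-analyticity of $(\varepsilon, q_0, \varphi_0) \mapsto (q_1, \varphi_1)$.

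The second step is to introduce the canonical momentum
$$p := -|\partial_q \gamma(\varepsilon, q)|\cos\varphi,$$
so that the pulled-back symplectic form $|\partial_q\gamma|\, dq \wedge d(-\cos\varphi)$ becomes $dq\wedge dp$, and the billiard admits the real-analytic generating function $S_\varepsilon(q_0,q_1) = -\|\gamma(\varepsilon, q_0)-\gamma(\varepsilon, q_1)\|$. This turns the dynamics into a family $(F_\varepsilon)$ of exact symplectic twist maps on the strip $\mathbb A^{\varepsilon} = \{(q,p) : |p| < |\partial_q \gamma(\varepsilon, q)|\}$, with $p^{\pm}(\varepsilon, q) = \pm|\partial_q \gamma(\varepsilon, q)|$ continuous (in fact analytic) in $(\varepsilon, q)$. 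Periodicity in $q$ is inherited from $\gamma$; the twist condition follows from $\partial_\varphi q_1 > 0$, a direct consequence of strict convexity; boundary preservation is automatic because trajectories with glancing angles remain close to $\partial\Omega_\varepsilon$. A short geometric argument shows that $TI(F_\varepsilon) = (0, 1)$, so the hypothesis $m/n \in (0,1)$ gives $m/n \in TI(F_\varepsilon)$ for every $\varepsilon \in I$.

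Finally, I would translate the billiard statement into one about invariant graphs of $F_\varepsilon$: a rotational $(m,n)$-periodic invariant curve of the billiard map is automatically a Lipschitz graph by Birkhoff's theorem (Remark \ref{remark:Birkhoff_theorem}), hence corresponds, via the change of coordinates above, to an $(m,n)$-periodic invariant graph of $F_\varepsilon$, and conversely. Applying Theorem \ref{theorem:main2} to $(F_\varepsilon)_{\varepsilon \in I}$ then gives the desired dichotomy. The main obstacle is the second step: one must simultaneously achieve the exact symplectic form $dq\wedge dp$ of the paper's framework, a common cylinder structure for all $\varepsilon \in I$, and analytic dependence on $\varepsilon$. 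The arc-length normalization $q = s/L_\varepsilon$ alone does not suffice because the natural coordinate $-\cos\varphi$ is not conjugate to it; the invariant reformulation $p = -|\partial_q\gamma|\cos\varphi$ is precisely what makes exactness, analyticity in $\varepsilon$, and the boundary description $p^{\pm} = \pm|\partial_q \gamma|$ compatible.
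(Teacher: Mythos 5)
Your proposal is correct and follows essentially the same route as the paper: reduce to Main Theorem 1 by expressing the family of billiard maps as a jointly analytic family of exact symplectic twist maps on a common cylinder with $q$-period $1$, verify $\text{TI}(F_{\varepsilon})=(0,1)$, and then invoke Theorem~\ref{theorem:main2}. The only genuine divergence is in the normalization that achieves a common period-$1$ phase cylinder. You correctly note that with normalized arc-length $q=s/L_{\varepsilon}$ the coordinate $-\cos\varphi$ is no longer conjugate to $q$, and you fix this by rescaling the momentum to $p=-|\partial_q\gamma|\cos\varphi$, obtaining an $\varepsilon$-dependent strip $p^{\pm}=\pm|\partial_q\gamma(\varepsilon,q)|$ (which the framework of Section~\ref{secpreliminary} does accommodate, since $p^{\pm}$ is allowed to depend on both $\varepsilon$ and $q$). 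The paper sidesteps the issue more simply: it first replaces each $\Omega_{\varepsilon}$ by a homothetic copy of perimeter $1$, observing that homotheties preserve the existence of $(m,n)$-periodic invariant graphs; after this, plain arc-length $s$ and $\sigma=-\cos\varphi$ already give an exact symplectic twist map on the fixed, $\varepsilon$-independent strip $\RR\times(-1,1)$. Both normalizations are legitimate and yield the same conclusion; the paper's is a bit cleaner because the strip and the boundary functions $p^{\pm}=\pm1$ become constant, which streamlines the verification of the twist interval and the application of Proposition~\ref{proposition:compactness}.
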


This result does not answer Birkhoff's conjecture, but rather confirms how fragile integrability is.  
Let us also mention the work of Kaloshin-Koudjinan \cite{KaloshinKoudjinan} in which they study for billiard domains close to a disk the coexistence of two invariant curves of rotation numbers $1/2$ and $1/2q+1$ for $q\geq 1$.\\

The proof  Theorem \ref{theorem:birkhoff_billiard}  is a consequence of Main Theorem (Theorem \ref{theorem:main2}), as it will be shown in Section \ref{section:proof_inner_billiard}.\\

\begin{remark}
An interesting application of Theorem \ref{theorem:birkhoff_billiard} regards analytic families of  Birkhoff billiards generated by the action of a geometric flow.  One of the main motivations comes from  \cite[Appendix F]{KaloshinSorrentino}, where  the authors proposed a possible approach to extend their local analysis (near ellipses) to a more global one, by 
considering the evolution  of a given Birkhoff billiard under the so-called \textit{affine curvature flow}: the family of billiards thus obtained will ``converge'' (up to renormalize their area) to ellipses (see for example \cite{SapiroTannenbaum}  for more details).

More precisely, given a strictly convex bounded domain $\Omega$ with analytic boundary and $\delta>0$, there is a one parameter family $(\Omega_{\varepsilon})_{\varepsilon\in[0,T]}$ of strictly convex bounded domains satisfying the following properties:
\begin{itemize}
\item $\Omega_0=\Omega$ and $\Omega_T$ is $\delta$-close to an ellipse;
\item for $\varepsilon>0$, the domain  $\Omega_{\varepsilon}$ has an analytic boundary, and the family $(\Omega_{\varepsilon})_{\varepsilon\in(0,T]}$ is analytic in $\varepsilon$.
\end{itemize}

The crucial observation is that $\Omega_{\varepsilon}$ are ellipses if and only if one of them is an ellipse.\\
One of the main obstacle in applying this idea is that it is not easy to show that integrability is preserved by the action of the flow: this statement turns out to be equivalent to Birkhoff conjecture.\\
An immediate application of Theorem \ref{theorem:birkhoff_billiard} implies that either the family consists of all integral billiards $(\Omega_{\varepsilon})_{\varepsilon\in[0,T]}$ or there are at most finitely many elements of the family that are integrable.\\
\end{remark}

Following the same approach as in Theorem \ref{theorem:main_Banach}, we can generalize the statement of Theorem \ref{theorem:birkhoff_billiard} in the following way.

In the set of domains with analytic boundary, we distinguish those whose boundary has a strip of analyticity of at least a given size, so to be able to consider Banach spaces of domains. 

It is well-known that, up to translations and rotations, strictly convex domains of $\RR^2$ with $\mathscr C^{k+1}$-smooth boundary is in one-to-one correspondence with the set of $\mathscr C^k$-smooth $1$-periodic maps $\varrho:\RR\to\RR^{>0}$ satisfying
\begin{equation}
\label{equation:0_Fourier}
\int_0^1\varrho(\theta)e^{2i\pi\theta}d\theta=0.
\end{equation}
The idea is that given such a map $\varrho$, the function $\gamma$ defined for any $\theta\in[0,2\pi]$ by
$$\gamma(\theta) := \left(\int_0^{\theta}\varrho(t)\cos(t)dt,\int_0^{\theta}\varrho(t)\sin(t)dt\right)$$
parametrizes a simple closed curve containing the origin and which is tangent to the $x$-axis at the origin ($\theta$ represents the angle that the tanget to the curve forms with the positive $x$ semi-axis). This idea is developped in more details in \cite[Section 2]{MarviziMelrose}.

Hence, given $r>0$ we consider the set $\mathscr D_r$ of $1$-periodic analytic maps $\varrho:\RR\to\RR^{>0}$ satisfying \eqref{equation:0_Fourier} and having a strip of analyticity of size at least $r$, in the following sense: if the Fourier expansion of $\varrho$ is $\varrho(\theta):=\sum_{p\in\ZZ}\widehat\varrho_pe^{2i\pi p\theta}$, then we assume that $\widehat\varrho_p = \mathcal O(e^{-r|p|})$ as $p\to\pm\infty$. $\mathscr D_r$ is a Banach space.
Thus we can consider analytic subsets of $\mathscr D_r$, as introduced in Definition \ref{defbanach} and prove the following theorem.

\begin{theorem}
\label{theorem:main_Banach_billiard}
Let $m/n\in(0,1)$. The set of domains $\Omega\in\mathscr{D}_r$ such that the billiard map in $\Omega$ has an $(m,n)$-periodic invariant graph is a strict analytic subset of $\mathscr{D}_r$.
\end{theorem}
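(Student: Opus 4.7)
The plan is to deduce Theorem \ref{theorem:main_Banach_billiard} from Theorem \ref{theorem:main_Banach} by pulling back the analytic subset $\mathscr T^{\omega}_{m,n}$ through the map $\Phi:\mathscr D_r\to\mathscr T^{\omega}$ that sends a domain $\Omega$ (parametrized by $\varrho\in\mathscr D_r$) to its billiard map $F_\Omega$, expressed in the coordinates $(\theta,-\cos\varphi)$ on the fixed cylinder $\mathbb T^1\times(-1,1)$, where $\theta$ is the tangent-angle parameter built from $\varrho$ via the formulas preceding \eqref{equation:0_Fourier}. In these coordinates the phase space does not depend on $\Omega$, so $\Phi$ takes values in a common Banach space of analytic twist maps.

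First I would verify that $\Phi$ is analytic. The boundary parametrization $\gamma$ is a bounded linear (hence analytic) function of $\varrho$, and its image inherits a uniform strip of analyticity of size depending only on $r$ because of the exponential Fourier decay built into $\mathscr D_r$. The billiard map is then defined implicitly: $F_\Omega(\theta,\varphi)=(\theta',\varphi')$, where $\theta'$ is the parameter of the second intersection with $\partial\Omega$ of the oriented line issued from $\gamma(\theta)$ with angle $\varphi$ to the tangent, and $\varphi'$ is determined by the reflection law. These defining equations are analytic in all arguments, and strict convexity of $\Omega$ provides the non-degeneracy needed to apply the Banach-space analytic implicit function theorem. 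The strip size propagates (with a controlled loss) to a uniform strip of analyticity of $F_\Omega$, so $\Phi(\varrho)\in\mathscr T^{\omega}$ and $\Phi$ is analytic between Banach spaces.

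Once analyticity of $\Phi$ is established, the preimage under $\Phi$ of a local defining map for $\mathscr T^{\omega}_{m,n}$ (provided by Theorem \ref{theorem:main_Banach}) supplies a local defining map for $\Phi^{-1}(\mathscr T^{\omega}_{m,n})$ around any point, so this preimage is an analytic subset of $\mathscr D_r$ in the sense of Definition \ref{defbanach}. Closedness follows from closedness of $\mathscr T^{\omega}_{m,n}$ and continuity of $\Phi$.

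The remaining point, which I expect to be the most delicate, is strictness: one must exhibit at least one $\Omega\in\mathscr D_r$ whose billiard map admits no $(m,n)$-periodic invariant graph. I would proceed as follows. Starting from a circle $\Omega_0$, which lies in $\mathscr D_r$ for every $r$, I would consider a non-trivial analytic one-parameter deformation $(\Omega_\varepsilon)_{\varepsilon\in I}$ inside $\mathscr D_r$ obtained by an arbitrarily small trigonometric-polynomial perturbation of $\varrho$. By Theorem \ref{theorem:birkhoff_billiard}, the set of $\varepsilon\in I$ for which $\Omega_\varepsilon$ carries an $(m,n)$-periodic invariant graph is either discrete or the whole interval; and the second alternative is ruled out by the Poincaré--Birkhoff phenomenon, which in any generic analytic perturbation of the circle breaks the degenerate continuum of $(m,n)$-periodic points into finitely many non-degenerate orbits, incompatible with a continuous invariant graph filled with periodic points. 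Hence $\Phi^{-1}(\mathscr T^{\omega}_{m,n})\ne\mathscr D_r$, which concludes the proof.
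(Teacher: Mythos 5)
Your proposal follows the same route as the paper: define the assignment $\Phi:\mathscr D_r\to\mathscr T^{\omega}$, $\Omega\mapsto F_{\Omega}$, show it is analytic via the Banach-space analytic implicit function theorem (the paper cites Whittlesey for this), and then observe that the set in question is $\Phi^{-1}(\mathscr T^{\omega}_{m,n})$, hence an analytic subset by composing the local defining map for $\mathscr T^{\omega}_{m,n}$ from Theorem \ref{theorem:main_Banach} with $\Phi$. Your choice of the tangent-angle coordinate $\theta$ rather than normalized arc-length to put all billiard maps on a common cylinder is a legitimate alternative to the paper's device of rescaling to perimeter one.

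The one substantive difference is that you also try to verify \emph{strictness} ($X_{m,n}\neq\mathscr D_r$), which the paper's proof does not explicitly address (nor does the paper's proof of Theorem \ref{theorem:main_Banach} for that matter). Your idea — take an analytic one-parameter deformation of the disk and argue, via Theorem \ref{theorem:birkhoff_billiard} and a Poincaré–Birkhoff-type breaking of the $(m,n)$-periodic circle, that the resonant invariant curve cannot persist along the whole family — is correct in spirit, but as written it is not yet a proof: invoking ``the Poincaré–Birkhoff phenomenon in any generic analytic perturbation'' is exactly the assertion that needs to be established for at least one concrete perturbation (e.g.\ by a subharmonic Melnikov computation), and Theorem \ref{theorem:birkhoff_billiard} does not by itself rule out the second alternative. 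Note also that a non-circular ellipse is not usable for $m/n\neq 1/2$, since by Poncelet's closure theorem ellipses do possess $(m,n)$-periodic invariant graphs for every $m/n\in(0,1)\setminus\{1/2\}$; for $m/n=1/2$, however, the ellipse is a clean and fully rigorous witness of strictness (its rotation-number-$1/2$ invariant graphs are not filled with $2$-periodic points). So your strictness step is a worthwhile addition not present in the paper, but it needs either a concrete Melnikov-type computation for a specific perturbation of the circle, or a case split using ellipses for $m/n=1/2$ and an explicit perturbative argument for the remaining rationals.
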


\begin{remark}
Similarly to what observed in Remark \ref{remaftermainthm2}, one could use Theorem \ref{theorem:main_Banach_billiard} to obtain an alternative proof of Theorem \ref{theorem:birkhoff_billiard}.
\end{remark}

\subsection{Application to dual billiards} 
\label{subsection:intro_outer_billiard}
Given a strictly convex domain $\Omega\subset\RR^2$ with a smooth oriented boundary, the \textit{dual} or {\it outer billiard outside $\Omega$} can be defined as follows (see Figure \ref{figure:dual_billiard}). For any point $p\in\RR^2\setminus\Omega$, there are at most two tangent lines to $\partial\Omega$ passing through $p$. Consider the unique one which is tangent to $\partial\Omega$ at a point $q$ and such that the vector $\vec{pq}$ has the same orientation as the boundary $\partial\Omega$ at $q$. Define the image by $p$ by the dual billiard map as the point $F(p)$ on the latter tangent line $T_q\partial\Omega$ such that $q$ is the midpoint between $p$ and $F(p)$ (see Figure 2).

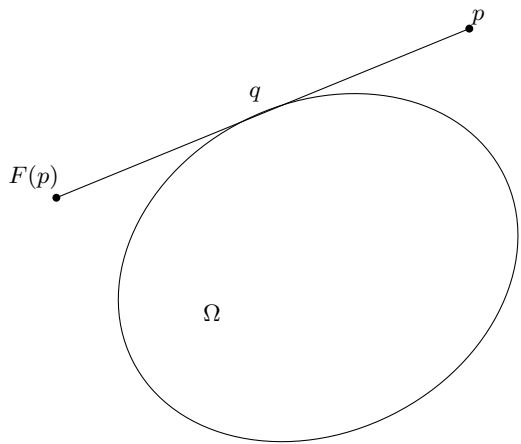
\begin{figure}[!h]
\definecolor{uququq}{rgb}{0.25,0.25,0.25}
\definecolor{qqqqff}{rgb}{0,0,1}
\definecolor{xdxdff}{rgb}{0.49,0.49,1}
\definecolor{cqcqcq}{rgb}{0.75,0.75,0.75}
\begin{tikzpicture}[line cap=round,line join=round,>=triangle 45,x=3.0cm,y=3.0cm]
\clip(-1.4,-0.6) rectangle (1,1.4);
\draw [rotate around={26.57:(0,0.25)}] (0,0.25) ellipse (2.76cm and 2.19cm);
\draw (-1.16,0.56)-- (0.67,1.31);
\begin{scriptsize}
\draw[color=black] (-0.47,0.05) node {$\Omega$};
\draw[color=black] (-0.28,1.02) node {$q$};
\fill [color=black] (-1.16,0.56) circle (1.5pt);
\draw[color=black] (-1.26,0.65) node {$F(p)$};
\fill [color=black] (0.67,1.31) circle (1.5pt);
\draw[color=black] (0.71,1.36) node {$p$};
\end{scriptsize}
\end{tikzpicture}
\caption{The point $F(p)$ is the image of $p$ by the dual billiard map around the domain $\Omega$. The point $q$ is the midpoint between $p$ and $F(p)$ which are supported by a line tangent to $\partial\Omega$ at $q$.}
\label{figure:dual_billiard}
\end{figure}

{According to \cite{MoserBook}, dual billiards were introduced by B. H. Newman in 1960 and also mentionned by P. C. Hammer in \cite{Hammer} a bit later.} Their properties were largely studied since then. They are known to be symplectic twist maps of the infinite annulus $\TT^1\times(0,+\infty)$, and thus can be investigated in the context of Aubry-Mather theory \cite{Boyland}. Douady \cite{Douady} showed that if the boundary $\partial\Omega$ is sufficiently smooth (at least $\mathcal C^6$), then there is a positive measure set of invariant curves accumulating to the boundary, as well as a positive measure set of invariant curves accumulating at infinity. In paticular this gives a negative answer to the famous question of the existence of unbounded orbits. Adapting a result of Mather for Birkhoff billiards (see \cite{ForniMather}) Boyland \cite{Boyland} proved that if the curvature of a convex  domain vanishes, or has jump discontinuities, then there is a neighborhood of the boundary without invariant curves. 

A version of Birkhoff's conjecture is also studied for dual billiards. It is indeed known that the phase space of dual billiards around ellipses is foliated by invariant curves, induced by any bigger ellipse homothetically equivalent to the initial billiard.  Bialy \cite{BialyOuter} proved a total integrability result: if the phase space of a dual billiard is foliated by continuous invariant curves then the billiard is an ellipse. If we assume the foliation to be only in a open set of the phase space (local integrability), some partial positive results were given in \cite{GlutsyukShustin, TabachnikovOuter}.

In this context, our main result reads:\\

\begin{theorem}[Invariant curves in families of outer billiards]
\label{theorem:outer_billiard}
Let $I$ be an interval and $(\Omega_{\varepsilon})_{\varepsilon\in I}$ be an analytic family of strictly convex analytic domains. Then given a pair $(m,n)\in\ZZ\times\NN^{\ast}$ of coprime integers such that $m/n\in(0,1)$, the set of $\varepsilon\in I$ such that the outer billiard map associated to $\Omega_{\varepsilon}$ has an $(m,n)$-periodic invariant curve is either discrete or consists of the whole $I$. 
\end{theorem}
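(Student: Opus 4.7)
The strategy is to cast the family of outer billiard maps $(F_{\Omega_\varepsilon})_{\varepsilon\in I}$ as an analytic family of exact symplectic twist maps satisfying the hypotheses of Main Theorem 1, and then conclude by direct application of Theorem~\ref{theorem:main2}. Since rotational invariant curves of such twist maps are automatically graphs (Remark~\ref{remark:Birkhoff_theorem}), this will yield the statement.

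I would first introduce coordinates on the exterior of $\Omega_\varepsilon$ that do not depend on $\varepsilon$. Parametrize $\partial\Omega_\varepsilon$ by the angle of its outward normal via an analytic diffeomorphism $\gamma_\varepsilon : \TT^1 \to \partial\Omega_\varepsilon$; this is well defined because each $\Omega_\varepsilon$ is strictly convex and analytic, and it depends analytically on $\varepsilon$. Every exterior point has a unique writing $p = \gamma_\varepsilon(\theta) + t\,\tau_\varepsilon(\theta)$ with $\tau_\varepsilon$ the oriented unit tangent at $\gamma_\varepsilon(\theta)$ and $t>0$, producing coordinates $(\theta,t)\in\TT^1\times(0,+\infty)$ on the phase space in which the map $(\varepsilon,\theta,t)\mapsto F_{\Omega_\varepsilon}(\theta,t)$ is jointly analytic.

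Next, I would verify that, in these coordinates, $F_{\Omega_\varepsilon}$ is an exact symplectic twist map of the annulus $\mathbb{A}_{p^\pm}$ with $p^-\equiv 0$ and $p^+\equiv+\infty$, matching Definition~\ref{definition:general_symplectic_twist_map}. Periodicity is immediate; exact symplecticity, with generating function given (up to an additive constant) by the signed area between $\partial\Omega_\varepsilon$ and the broken-line trajectory, and the twist condition are classical for outer billiards (Douady, Tabachnikov, Boyland). Boundary preservation (axiom (iii)) follows from the standard observations that points close to $\partial\Omega_\varepsilon$ remain so after one iterate, and that points far from $\Omega_\varepsilon$ are sent to points also far from $\Omega_\varepsilon$. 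All these properties are stable under the analytic variation of $\varepsilon$ through $\gamma_\varepsilon$.

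Finally, I would check that $m/n \in \mathrm{TI}(F_{\Omega_\varepsilon})$ for every $\varepsilon\in I$. As $t\to 0^+$, the angular displacement $\pi_\theta F_{\Omega_\varepsilon}(\theta,t)-\theta$ tends to $0$ uniformly in $\theta$, while as $t\to+\infty$, a direct computation using $F_{\Omega_\varepsilon}(p)=2\gamma_\varepsilon(\theta)-p$ and the compactness of $\overline{\Omega_\varepsilon}$ shows that this displacement sweeps continuously through the full range matching the normalization under which the statement's interval $(0,1)$ is expressed. Hence $\mathrm{TI}(F_{\Omega_\varepsilon})$ contains every rational $m/n\in(0,1)$ for every $\varepsilon\in I$, and Theorem~\ref{theorem:main2} applies to give the dichotomy claimed in Theorem~\ref{theorem:outer_billiard}. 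The main technical obstacle will be in step two, namely the verification of axiom (iii) at the unbounded end $t\to+\infty$ together with the analytic dependence of the generating function on $\varepsilon$ up to $t=0$; the remaining ingredients follow from analyticity of $\gamma_\varepsilon$ and standard facts about outer billiards.
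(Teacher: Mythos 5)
Your overall strategy --- casting the analytic family of outer billiard maps as an analytic family of exact symplectic twist maps and then applying Theorem~\ref{theorem:main2} --- is exactly the route the paper takes, and most of the intermediate verifications you outline are the right ones. Two points, however, need attention.

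First, the fibre coordinate. With $(\theta,t)$ as you define them ($t$ the distance from the tangency point along the oriented tangent line), the outer billiard map is \emph{not} an exact symplectic twist map in the sense of Definition~\ref{definition:general_symplectic_twist_map}: the $2$-form it preserves is $t\,d\theta\wedge dt$, not $d\theta\wedge dt$. The paper avoids this by working in Boyland's \emph{envelope coordinates} $(\theta,\gamma)$ with $\gamma = t^2/2$, so that $d\theta\wedge d\gamma = t\,d\theta\wedge dt$ and the generating-function identity $P\,dQ - p\,dq = dS$ holds for the area-type generating function you cite. This is a small but necessary fix; without it the hypotheses of the general theorem are not met in the coordinates you chose.

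Second, and more substantially, the twist interval. You claim that as $t\to+\infty$ the angular displacement sweeps through the whole of $(0,1)$. This is incorrect. Since $F_{\Omega_\varepsilon}(p) = 2\gamma_\varepsilon(\theta) - p$ is point reflection through a tangency point lying in the compact set $\overline{\Omega_\varepsilon}$, as $t\to+\infty$ the map converges (uniformly on compact angular sectors) to point reflection about an interior point, so the angular displacement tends to \emph{half} a full turn, not a whole turn; in the normalization where $\theta$ lives on $\RR/\ZZ$ the twist interval of an outer billiard map is $(0,1/2)$, which is precisely what the paper's proof invokes. Consequently Assumption~(ii) of Theorem~\ref{theorem:main2}, namely $m/n\in\mathrm{TI}(F_\varepsilon)$ for all $\varepsilon$, fails whenever $m/n\in[1/2,1)$, and the main theorem cannot be applied on that range. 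For those rotation numbers the conclusion of Theorem~\ref{theorem:outer_billiard} is still (trivially) true because the set of admissible $\varepsilon$ is empty, but this is a separate observation; as written, your argument asserts an incorrect twist interval and would silently invoke Theorem~\ref{theorem:main2} where its hypotheses do not hold.
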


Theorem \ref{theorem:outer_billiard} for outer billiards is a consequence of Main Theorem (Theorem \ref{theorem:main2}), as it will be shown in Section \ref{section:proof_outer_billiard}.

{
\subsection{Application to symplectic billiards} 
\label{subsection:intro_symplectic_billiard} As for classical billiards, {\it symplectic billiards} -- which were firstly introduced by Albers and Tabachnikov in \cite{AlbersTabachnikov} -- are defined inside a strictly convex bounded domain $\Omega$ with smooth boundary. They describe the evolution of a infinitesimally small ball inside $\Omega$ which bounces on the boundary $\partial\Omega$ according to the following reflection law: given three successive impact points $p_1$, $p_2$ and $p_3$, the line joining $p_1p_3$ and the tangent line $T_{p_2}\partial\Omega$ of $\partial\Omega$ at $p_2$ are parallel (see Figure 3). We refer to \cite{AlbersTabachnikov} for more details and results related to this billiard model.

\begin{figure}{figsimplbill}
\centering
\begin{tikzpicture}[line cap=round,line join=round,>=triangle 45,x=2.0cm,y=2.0cm]
\clip(-2,-1.2) rectangle (2,1.35);
\draw [rotate around={0:(0,0)}] (0,0) ellipse (2.82cm and 2cm);
\draw [dash pattern=on 1pt off 1pt,domain=-3.19:3.33] plot(\x,{(--32-8.43*\x)/29.69});
\draw [dash pattern=on 1pt off 1pt,domain=-3.19:3.33] plot(\x,{(-10.52-8.43*\x)/29.69});
\draw [-latex] (1.07,-0.66) -- (0.72,0.37);
\draw [-latex] (0.53,0.93) -- (-0.58,0.43);
\draw (-0.58,0.43)-- (-1.41,0.05);
\draw (0.72,0.37)-- (0.53,0.93);
\begin{scriptsize}
\draw[color=black] (1.15,-0.77) node {$p_1$};
\draw[color=black] (0.56,1.02) node {$p_2$};
\draw[color=black] (-1.5,-0.05) node {$p_3$};
\draw[color=black] (1.5,0.8) node {$T_{p_2}\partial\Omega$};
\draw[color=black] (0,0) node {$\Omega$};
\end{scriptsize}
\end{tikzpicture}
\caption{A symplectic billiard bounce $(p_1,p_3)\mapsto (p_2,p_3)$ in a strictly convex domain $\Omega$.}
\end{figure}

The symplectic billiard map $(p_1,p_2)\mapsto(p_2,p_3)$ describes the symplectic billiard trajectories inside $\Omega$. It appears \cite{AlbersTabachnikov} that there exists  a set of coordinates in which this map can be expressed as an exact symplectic twist map whose generating map is related to the canoncial symplectic area form in the plane of coordinates $(x,y)$, namely $dx\wedge dy$ - whence the name of this billiard model. It was shown, see \cite[Theorem 3]{AlbersTabachnikov}, that the latter map has a positive measure set of invariant curves, which in fact are caustics. The latter however have highly irrational rotation numbers, and are not concerned by our result.

Still, we show that Theorem \ref{theorem:main2} can be applied to rational invariant curves in analytic families of symplectic billiards; namely:
\begin{theorem}[Invariant curves in families of symplectic billiards]
\label{theorem:symplectic_billiard}
Let $I$ be an interval and $(\Omega_{\varepsilon})_{\varepsilon\in I}$ be an analytic family of strictly convex analytic domains. Then given a pair $(m,n)\in\ZZ\times\NN^{\ast}$ of coprime integers such that $m/n\in(0,1)$, the set of $\varepsilon\in I$ such that the symplectic billiard map associated to $\Omega_{\varepsilon}$ has an $(m,n)$-periodic invariant curve is either finite or consists of the whole $I$. 
\end{theorem}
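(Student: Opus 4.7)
The plan is to reduce the statement to Main Theorem 1 (Theorem \ref{theorem:main2}), exactly in the same spirit as the proofs of Theorems \ref{theorem:birkhoff_billiard} and \ref{theorem:outer_billiard}. First I would use the coordinates introduced in \cite{AlbersTabachnikov} to realise the symplectic billiard map $T_\varepsilon$ inside $\Omega_\varepsilon$ as an exact symplectic twist map on a suitable bundle $\mathbb A_{p_\varepsilon^{\pm}}$, taking as generating function the symplectic area $S_\varepsilon(p_1,p_2) = \tfrac{1}{2}(x_1 y_2 - x_2 y_1)$ of the triangle built on two consecutive impact points on $\partial\Omega_\varepsilon$, expressed in coordinates adapted to the boundary (for instance parametrising $\partial\Omega_\varepsilon$ by the angle that its oriented tangent makes with a fixed direction, which is possible since $\Omega_\varepsilon$ is strictly convex and analytic). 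The conjugate momentum variable is then canonically obtained by the relation $p\,dq = -\partial_1 S_\varepsilon \,dq$, giving an exact symplectic twist map.

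Next I would verify the four properties of Definition \ref{definition:general_symplectic_twist_map}: periodicity is a direct consequence of the cyclic nature of the boundary; the twist condition is proved in \cite{AlbersTabachnikov} and follows geometrically from the fact that sweeping the second point around $\partial\Omega_\varepsilon$ monotonically changes the corresponding first coordinate; boundary preservation holds because the diagonal $\{p_1=p_2\}$ corresponds to the glancing orbits on $\partial\Omega_\varepsilon$; and the exactness together with the explicit generating function $S_\varepsilon$ yields property (iv). Analyticity in $(\varepsilon,q,p)$ of $F_\varepsilon$ then follows from the assumed analytic dependence of $\partial\Omega_\varepsilon$ on $\varepsilon$, combined with the analytic implicit function theorem used to invert $q \mapsto Q$ required to pass from the generating form to $F_\varepsilon$ itself; this gives hypothesis (i) of Theorem \ref{theorem:main2}.

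For hypothesis (ii) I would check that $(0,1)\subset TI(F_\varepsilon)$ uniformly in $\varepsilon$. Geometrically, as the second impact point runs once around $\partial\Omega_\varepsilon$ while the first stays near a given point, the corresponding base coordinate winds exactly once, so the rotation numbers close to the two boundary components of $\mathbb A_{p_\varepsilon^{\pm}}$ are $0$ and $1$; hence any rational $m/n\in(0,1)$ lies in the twist interval. Once these two properties are established, Theorem \ref{theorem:main2} applies directly and yields the conclusion: the set of $\varepsilon$ for which $F_\varepsilon$ admits an $(m,n)$-periodic invariant graph is either discrete, hence finite on any compact subinterval of $I$, or the whole $I$.

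The main obstacle I anticipate is the bookkeeping required to check that the coordinates from \cite{AlbersTabachnikov} produce a \emph{uniformly analytic} family on a common strip $\mathbb A_{I,p^{\pm}}$, in particular that the width of the strip can be chosen locally uniform in $\varepsilon$ and that the inversion used to write $F_\varepsilon$ explicitly does not introduce singularities as $\varepsilon$ varies. This is a technical continuity/implicit-function argument, analogous to the one already performed in Section \ref{section:proof_inner_billiard} for Birkhoff billiards; once it is set up carefully, the remainder of the proof is a direct invocation of Theorem \ref{theorem:main2}.
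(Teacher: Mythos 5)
Your strategy coincides with the paper's proof of this theorem: parametrize $\partial\Omega_\varepsilon$ analytically, take the symplectic area $S(t_1,t_2) = -\omega(\gamma(t_1),\gamma(t_2))$ as generating function, pass to the conjugate momentum $s_1 = -\partial_1 S = \omega(\gamma'(t_1),\gamma(t_2))$ to obtain an exact-symplectic twist map on $\mathbb A_{s^\pm}$, and then apply Theorem~\ref{theorem:main2} exactly as for Birkhoff and outer billiards. That is precisely what Section~\ref{section:proof_symplectic_billiard} does.

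Where your argument goes wrong is the twist interval. You claim that ``as the second impact point runs once around $\partial\Omega_\varepsilon$\ \ldots\ the rotation numbers close to the two boundary components are $0$ and $1$,'' so $TI(F_\varepsilon) = (0,1)$. That picture is correct for Birkhoff billiards but not here: the symplectic billiard phase space is $\{(t_1,t_2): t_1 < t_2 < t_1^*\}$, where $t_1^*$ is the unique parameter at which the tangent is parallel to the one at $t_1$. Over a fixed $t_1$ the second point only sweeps from $t_1$ to $t_1^*$, not around the whole boundary, and the upper boundary $\graph(s^+)$ is the parallel-tangent locus $t_2 = t_1^*$, not a second copy of the glancing set -- this is also the boundary component you omit when you describe boundary preservation only by the diagonal $\{p_1=p_2\}$. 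Near $s^+$ the per-step displacement is approximately $t_1^* - t_1$, and since $(t^*)^* = t + 1$ forces $(t^*-t)+((t^*)^*-t^*) = 1$, one has $\inf_t(t^*-t) \le 1/2$. Hence $TI(F_\varepsilon) = \left(0,\inf_t(t^*-t)\right) \subseteq (0,1/2]$, never $(0,1)$. (The paper itself also asserts $TI = (0,1)$ in this section, so the error is inherited rather than introduced; but to invoke Theorem~\ref{theorem:main2} honestly one must restrict to $m/n$ in the actual twist interval, e.g.\ $m/n\in(0,1/2)$ after choosing a parametrization with $t^* = t + 1/2$, which is always possible analytically since $t\mapsto t^*$ is an analytic, orientation-preserving, fixed-point-free involution of the circle.)
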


\subsection{Other billiard models} In previous subsections, we described how Theorem \ref{theorem:main2} can be applied to different billiard models such as the classical inner billiards, outer billiards and symplectic billiards. Actually Theorem \ref{theorem:main2} could be applied to many other billiard models, as long as their dynamics can be encoded by an exact-symplectic twist map. 

For a list of billiard models, the interested reader can consult \cite{AlbersTabachnikov_Dowker}. As an example, let us mention the outer length billiard, which is an analogue of the outer billiard model, but with a different generating function. In fact, periodic orbits are given by extremizing not the area of a circumscribed polygon -- as for outer billiards, but its perimeter. In this setting, the two pairs (classical billiards, symplectic billiards) and (outer length billiards, outer billiards) can be seen as ``dual'' to each other.}

\section{Preliminary results on periodic graphs}
\label{section:invariant_graphs}

In this section we prove some preliminary results on invariant periodic graphs of a symplectic twist map $F:\mathbb A_{p^{\pm}}\to \mathbb A_{p^{\pm}}$ with  generating function $S:\mathcal D\to\RR$. The main results in this section are Proposition \ref{proposition:equivalence_invariance_minimizing} and Proposition \ref{proposition:compactness}.

We first recall some basic notions related to the orbits of $F$, and we refer the reader to \cite{Gole} for more details about them. The projection 
$\pi_q: \RR^{2}\to\RR$ onto the $q$-component induces a bijection between \textit{orbits} $(q_k,p_k)_{k\in\ZZ}$ of $F$, where for all $k\in\ZZ$
$$F(q_k,p_k)=(q_{k+1},p_{k+1}),$$
and so-called \textit{stationary configurations} $(q_k)_{k\in\ZZ}$, which are sequences satisfying for all $k\in\ZZ$
$$(q_k,q_{k+1})\in\mathcal D
\qquad\text{and}\qquad
\partial_2S(q_{k-1},q_k)+\partial_1S(q_k,q_{k+1})=0.$$

An orbit $(q_k,p_k)_{k\in\ZZ}$ or its associated stationary configuration $(q_k)_{k\in\ZZ}$ are called \textit{minimal} if for any integers $u\leq v$ the family $(q_k)_{u\leq k\leq v}$ minimizes the functionnal 
$$\underline x=(x_k)_{u\leq k\leq v}\longmapsto\sum_{k=u}^{v-1}S(x_k,x_{k+1})$$
among all families $\underline x\in\RR^{v-u+1}$ with $x_u=q_u$ and $x_v=q_v$.

It is known (see  \cite[Proposition 6]{Arnaud} or \cite[Proof of Theorem A]{Bialy} or), that a minimal orbit $(q_k,p_k)_{k\in\ZZ}$ has no \textit{conjugate points}, which means that any two points $(q_k,p_k)$ and $(q_{\ell},p_{\ell})=F^{\ell-k}(q_k,p_k)$ along the orbit satisfy
$$\partial_p \left( \pi_q \circ F^{\ell-k}\right)(q_k,p_k)\neq 0.$$

\medskip

\begin{proposition}
\label{proposition:equivalence_invariance_minimizing}
Let $F: \mathbb A_{p^{\pm}} \longrightarrow \mathbb A_{p^{\pm}}$ be an exact-symplectic twist map  and $\Gamma\subset\mathbb A_{p^{\pm}}$ be an $(m,n)$-periodic Lipschitz continuous graph of $F$, for some $m\in \ZZ, n\in \NN^*$ coprime. Then:
\begin{itemize}
\item[{\bf (i)}] $\Gamma$ is invariant by $F$;
\item[{\bf (ii)}] the projection of an orbit intersecting $\Gamma$ is a minimal configuration, and hence has no conjugate points;
\item[{\bf (iii)}] $\Gamma$ is as smooth as $F$ is;
\item[{\bf (iv)}]  $F$ has no other $(m,n)$-periodic Lipschitz continuous graph.\\
\end{itemize}
\end{proposition}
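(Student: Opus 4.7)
The plan is to establish (ii) first; (i), (iii), and (iv) all follow from it. For each $q_0 \in \RR$, let $(q_k(q_0), p_k(q_0))_{0\le k\le n}$ denote the $F$-orbit of $(q_0, \gamma(q_0))$; the $(m,n)$-periodicity of $\Gamma$ gives $q_n = q_0 + m$ and $p_n = p_0$. A short computation shows that the period action
$$\mathcal{A}(q_0) := \sum_{k=0}^{n-1} S(q_k(q_0), q_{k+1}(q_0))$$
is constant in $q_0$: when differentiating, the interior-index terms in $\mathcal{A}'(q_0)$ cancel by the stationary-configuration equations $\partial_2 S(q_{k-1}, q_k) + \partial_1 S(q_k, q_{k+1}) = 0$, while the boundary terms reduce to $\partial_1 S(q_0, q_1) + \partial_2 S(q_{n-1}, q_n) = -p_0 + p_n = 0$. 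Since $\gamma$ is Lipschitz the computation is valid almost everywhere, and $\mathcal{A}$ is itself Lipschitz, hence constant.

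Next I identify this constant with the minimum $W_{\min}$ of the period action over all $(m,n)$-periodic configurations. The infimum is attained (by compactness modulo translation) at some minimizer $(q_0^\ast, \ldots, q_{n-1}^\ast)$. Because $\Gamma$ is a graph, there is a unique point $(q_0^\ast, \gamma(q_0^\ast)) \in \Gamma$ with first coordinate $q_0^\ast$, and its orbit together with the minimizer's orbit share $q$-endpoints at times $0$ and $n$. An Aubry-type swapping argument, exploiting the minimality of the second configuration, forces the two to coincide, so $\mathcal{A}(q_0^\ast) = W_{\min}$, and by constancy $\mathcal{A} \equiv W_{\min}$. Standard arguments in the periodic setting upgrade period-wise minimality to minimality on arbitrary finite segments (by splitting into full periods plus a short residue), giving (ii); the no-conjugate-point property then follows from the cited \cite[Proposition 6]{Arnaud}.

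Given (ii), the remaining items follow quickly. For (iii), since $\partial_p(\pi_q \circ F^n)(q, \gamma(q)) \neq 0$ by no conjugate points, the relation $\pi_q \circ F^n(q, p) = q + m$ lets one implicitly solve for $p = \gamma(q)$ with the same regularity as $F$, via the (analytic) implicit function theorem. For (i), given $(q_0, \gamma(q_0)) \in \Gamma$ with image $(q_1, p_1) = F(q_0, \gamma(q_0))$, the time-shifted $\Gamma$-orbit through $(q_0, \gamma(q_0))$ and the orbit through $(q_1, \gamma(q_1))$ are both minimizing $(m,n)$-periodic orbits sharing $q$-endpoints $q_1$ and $q_1+m$; Aubry's crossing lemma forces them to coincide, giving $p_1 = \gamma(q_1)$, i.e.\ $F(\Gamma) \subseteq \Gamma$. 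Finally (iv) follows the same pattern: a second $(m,n)$-periodic Lipschitz graph $\Gamma'$ would also consist of minimizers by (ii), and two graphs of minimizers over $\TT^1$ cannot differ without crossing, which the non-crossing lemma forbids. I expect the principal obstacle to be the identification $\mathcal{A} \equiv W_{\min}$: a priori $\Gamma$ might carry non-minimizing periodic orbits (for example of Birkhoff minimax type), and ruling this out requires the full strength of the Lipschitz graph structure together with Aubry's crossing analysis.
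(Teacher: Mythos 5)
Your reduction of (i), (iii) and (iv) to (ii) is reasonable (and your treatment of (iii) is exactly the paper's), but the proof of (ii) itself breaks down at the step you yourself flag. The constancy of $\mathcal{A}$ along $\Gamma$ is correct, yet it only says that all $\Gamma$-orbits are stationary configurations with a common action value; the heart of the matter is the identification $\mathcal{A}\equiv W_{\min}$, and the argument offered for it does not work. Sharing the $q$-endpoints $q_0^{\ast}$ and $q_0^{\ast}+m$ with the periodic minimizer does not force the two segments to coincide: distinct orbit segments with equal endpoints are precisely what conjugate points produce, and their absence is what you are trying to prove, not something you may assume. Nor can Aubry's crossing/exchange lemma be invoked, since it compares two \emph{minimal} configurations, whereas the minimality of the $\Gamma$-configuration is the unknown; with only one configuration known to be minimal, the swap yields the one-sided inequality $W_{\min}\le\mathcal{A}(q_0^{\ast})$ and nothing more. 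There is a second, related gap: in the setting of Definition \ref{definition:general_symplectic_twist_map} the generating function is defined only on an open set $\mathcal D$ and the annulus $\mathbb A_{p^{\pm}}$ may be bounded, so ``compactness modulo translation'' may produce a minimizing configuration lying on the boundary of the admissible region, which is then not a stationary configuration of $F$ at all; existence of a periodic minimizer that is a genuine orbit is not free.

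These are exactly the difficulties the paper's proof is engineered to avoid: it first extends $F$ to a twist map $G$ of $\RR^2$ whose generating function coincides with $S$ on a compact set containing the relevant pairs, has a uniform twist, and is superlinear (via \cite[Section 8]{ForniMather} or \cite[Lemma 8.2]{Gole}); for such a $G$ the statement ``$(m,n)$-periodic Lipschitz graph $\Rightarrow$ invariant and made of minimal orbits'' is precisely \cite[Proposition 2.5]{AMS}, and minimality (hence absence of conjugate points) transfers back to $F$, after which (iii) follows by the implicit function theorem and (iv) from \cite[Lemma 13.2.10]{KH_book}. (The appendix gives a billiard-specific variant where the triangle inequality keeps the minimizer in the interior of the admissible set, which is the analogue of the superlinear extension.) So either quote such a result for the key step or supply the missing argument that some orbit of $\Gamma$ minimizes the periodic action; as written, the central claim of your proof is asserted rather than proved, as your closing sentence concedes.
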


\begin{remark}
Note that whereas there can exist at most one invariant rotational curve for each irrational rotation number in the twist interval (see for instance \cite[Theorem 13.2.9]{KatokHasselblatt}), this is not the case for rational rotation number. In fact, two invariant graphs with the same rational rotation number might coexist: in this case, their intersection consists of periodic orbits and the two graphs should contain also non-periodic ones. For instance, consider the twist map corresponding to an elliptic billiard (non-circular):  its phase space contains two invariant graphs of rotation number $1/2$.
\end{remark}

\begin{proof}
It is well-known that Items (i) and (ii) are equivalent, see for example \cite[Theorem 17.4]{ForniMather} for the implication (i) $\Rightarrow$ (ii), and \cite[Proposition 2.5]{AMS}  for the reverse implication. So to prove the result it is enough to show that items (i), (iii) and (iv) are satisfied.


\textbf{(i)} To show that $\Gamma$ is invariant by $F$, we {extend $F$  to a symplectic twist map $G:\RR^2\to\RR^2$}, coinciding with $F$ in a neighborhood of $\Gamma$  and satisfying the superlinearity condition at infinity, namely
$$\lim_{|q-Q|\to+\infty} \frac{S_G(q,Q)}{|Q-q|}=+\infty,$$
{where $S_G$ denotes the generating function of $G$}.
If such a $G$ exists, then the orbits of $G$ intersecting $\Gamma$ are minimal by \cite[Proposition 2.5]{AMS}; therefore, $\Gamma$ is invariant by $G$, and hence by $F$.

The construction  of such a  $G$ is quite standard, and we refer to \cite[Section 8]{ForniMather}  or \cite[Lemma 8.2]{Gole}. It is however not necessarily analytic, but this does not affect the result. {Let us assume that $F$ is positive}, and let $S$ be the generating function of $F$: it is defined on an open set $\mathcal D$. Consider the set of pairs $(q,Q)\in \mathcal D$ such that there is a point $x\in\Gamma$ and an integer $k\in \{0,\dots,n-1\}$ for which $\pi_q\circ F^k(x)=q$ and $\pi_q\circ F^{k+1}(x)=Q$; this set is contained in a compact set $K\subset \mathcal D$. On this compact set the twist condition is uniform, which means that there is {$a>0$} such that $\partial_{12}^ 2S_{|K}<-a$. Hence applying  
\cite[Section 8]{ForniMather}, or \cite[Lemma 8.2]{Gole} we deduce the existence of a symplectic twist map $G:\RR^2 \longrightarrow \RR^2$ whose generating function $S_G$ is defined on $\RR^2$, coincides with $S$ on $K$ and has the uniform twist everywhere, {that is $\partial_{12}^2S_G < -a'$ everywhere, for some $a'>0$.  This implies together with \cite[Proposition 11.2]{Gole} that the map $G$ has the announced properties, which concludes the result.}

\textbf{(iii)} The smoothness comes from the property that an orbit corresponding to an action-minimizing configuration has no conjugate points (see the proof of \cite[Theorem A]{Bialy} or \cite[Proposition 6]{Arnaud}). Hence following the argument of \cite[Proposition 2.5]{AMS}, this implies that the map $R:(q,p)\mapsto \pi_q\circ F^n(q,p)-q-m$ which vanishes on $\Gamma$ satisfies $\partial_p R(q,p)\neq 0$ for $(q,p)\in\Gamma$. By the implicit mapping theorem, $\Gamma$ can be described locally as a graph of a map which is as smooth as $F$, which proves the assertion.

\textbf{(iv)} We apply \cite[Lemma 13.2.10]{KH_book}: if $F$ has an invariant curve $\Gamma$ of rotation number $m/n$, then any order-preserving orbit of $F$ whose closure is distinct from $\Gamma$ has  rotation number $\neq m/n$ (the definition of order-preserving orbit is given in \cite{KH_book}, and periodic orbits are a particular case of them). Hence any periodic orbit of rotation number $m/n$ should have a point on  $\Gamma$, and consequently they must be entirely contained in $\Gamma$ since $\Gamma$ is invariant by $F$.
\end{proof}

Let us show a localisation result when one consider a family of symplectic twist maps.

\begin{proposition}
\label{proposition:compactness}
Assume that $I\subset\RR$ is a {compact interval}  and $(m,n)\in\ZZ\times\NN^{\ast}$. Suppose that for any $\varepsilon\in I$ we are given an exact symplectic twist map $F_{\varepsilon}$ such that:
\begin{itemize}
\item[{\bf (i)}] the map $(\varepsilon,q,p)\in \mathbb A_{I,p^{\pm}}\longmapsto F_{\varepsilon}(q,p)$ is $\class^1$-smooth;
\item[{\bf (ii)}] $m/n \in TI(F_{\varepsilon})$  for any $\varepsilon\in I$.
\end{itemize}
Then, there is a neighborhood $U$ of $\graph(p^-)\cup\graph(p^+)$ in $I\times\RR\times \RR$ such that for any $\varepsilon\in I$, any $(m,n)$-periodic orbit of $F_{\varepsilon}$ is contained in $K:=\mathbb A_{I,p^{\pm}}\setminus U$.
\end{proposition}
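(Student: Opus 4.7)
The plan is to combine the twist-interval hypothesis (ii) with the boundary-preservation property (iii) of Definition \ref{definition:general_symplectic_twist_map}, and to make both uniform in $\varepsilon$ using the joint $\class^1$-smoothness of the family and the compactness of $I$. The starting observation is that for any $(m,n)$-periodic orbit $(q_k,p_k)_{k=0}^{n-1}$ of some $F_\varepsilon$, the $n$ increments $q_{k+1}-q_k$ sum, in the universal cover, to exactly $m$, so their average equals $m/n$; if every increment were strictly less than $m/n$, this would be impossible. The twist-interval hypothesis provides exactly such a strict inequality in a neighborhood of $\graph(p^-)$ (and the reverse one near $\graph(p^+)$), while boundary preservation will force iterates of points initially close to these graphs to remain in such neighborhoods for all $n$ steps.

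More precisely, I would first fix $\varepsilon_0 \in I$, pick $\alpha_0 < m/n < \beta_0$ inside the open interval $\text{TI}(F_{\varepsilon_0})$, and apply the definition of the twist interval to obtain neighborhoods $V^\pm_{\varepsilon_0}$ of $\graph(p^\pm(\varepsilon_0,\cdot))$ on which $\pi_q \circ F_{\varepsilon_0}(q,p) - q \le \alpha_0$ (resp.\ $\ge \beta_0$). Using the continuity of $(\varepsilon,q,p) \mapsto F_\varepsilon(q,p)$ and of $\varepsilon \mapsto p^\pm(\varepsilon,\cdot)$, I would find an open interval $J_{\varepsilon_0} \ni \varepsilon_0$ in $I$ such that, for every $\varepsilon \in J_{\varepsilon_0}$, one still has $\graph(p^\pm(\varepsilon,\cdot)) \subset V^\pm_{\varepsilon_0}$ and the slightly relaxed bounds $\pi_q \circ F_\varepsilon(q,p) - q \le \alpha'_0 < m/n$ on $V^-_{\varepsilon_0} \cap \mathbb{A}_{I,p^\pm}^\varepsilon$ (resp.\ $\ge \beta'_0 > m/n$ on $V^+_{\varepsilon_0} \cap \mathbb{A}_{I,p^\pm}^\varepsilon$). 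Iterating property (iii) $n$ times for $F_{\varepsilon_0}$ and invoking continuity in $\varepsilon$ once more, I would carve out smaller neighborhoods $U^\pm_{\varepsilon_0} \subset V^\pm_{\varepsilon_0}$ such that $F_\varepsilon^k(U^\pm_{\varepsilon_0} \cap \mathbb{A}_{I,p^\pm}^\varepsilon) \subset V^\pm_{\varepsilon_0}$ for every $\varepsilon \in J_{\varepsilon_0}$ and $0 \le k \le n-1$. If then $(q_k,p_k)_{k=0}^{n-1}$ is an $(m,n)$-periodic orbit of some $F_\varepsilon$ with $\varepsilon \in J_{\varepsilon_0}$ meeting $U^-_{\varepsilon_0}$, every iterate stays in $V^-_{\varepsilon_0}$, so summing the $n$ bounds $q_{k+1}-q_k \le \alpha'_0$ gives $m \le n\alpha'_0 < m$, a contradiction; the case of $U^+_{\varepsilon_0}$ is symmetric.

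To conclude, by compactness of $I$ I would extract a finite subcover $I = J_{\varepsilon_1} \cup \cdots \cup J_{\varepsilon_N}$ and set
\[
U := \bigcup_{i=1}^N J_{\varepsilon_i} \times \bigl(U^-_{\varepsilon_i} \cup U^+_{\varepsilon_i}\bigr),
\]
which is by construction a neighborhood of $\graph(p^-) \cup \graph(p^+)$ in $I \times \RR \times \RR$ that avoids every $(m,n)$-periodic orbit of every $F_\varepsilon$, $\varepsilon \in I$. The main technical obstacle is producing the uniform-in-$\varepsilon$ versions of the twist-interval bound and of boundary preservation on each $J_{\varepsilon_0}$; this is particularly delicate when $p^\pm$ takes the values $\pm\infty$, so that $V^\pm_{\varepsilon_0}$ is unbounded in $p$, and is handled by using the $\ZZ$-periodicity of both $F_\varepsilon$ and $p^\pm$ to reduce the relevant estimates to a compact fundamental domain in $q$.
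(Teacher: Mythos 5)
Your strategy is, in substance, the same as the paper's: one-step bounds on $\pi_q\circ F_{\varepsilon}(q,p)-q$ near the two boundary graphs, confinement of the first $n$ iterates near the boundary, and the observation that the increments of an $(m,n)$-periodic orbit sum to $m$ and hence average to $m/n$; your finite-subcover packaging in $\varepsilon$ merely replaces the paper's argument by contradiction along a sequence $\varepsilon_j\to\varepsilon$, which is a cosmetic difference. The genuine gap is in the confinement step, which you claim follows by iterating property (iii) of Definition \ref{definition:general_symplectic_twist_map}. Property (iii) only concerns the \emph{union} of the two graphs: given a neighborhood $V=V^-_{\varepsilon_0}\cup V^+_{\varepsilon_0}$ of $\graph(p^-)\cup\graph(p^+)$ it produces a neighborhood $U$ of the union with $F(U\cap\mathbb A_{p^{\pm}})\subseteq V\cap\mathbb A_{p^{\pm}}$; it does \emph{not} give the matched-sign inclusions $F_{\varepsilon}^k(U^-_{\varepsilon_0}\cap\mathbb A_{I,p^{\pm}}^{\varepsilon})\subset V^-_{\varepsilon_0}$ that your summation uses. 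If some iterate of a point of $U^-_{\varepsilon_0}$ landed in $V^+_{\varepsilon_0}$, the corresponding increment would be $\geq\beta_0'$ rather than $\leq\alpha_0'$, and $n$ increments lying on both sides of $m/n$ can perfectly well sum to $m$, so no contradiction is reached. This one-sided confinement is exactly what the paper imports through Lemma \ref{lemma:safe_space_boundary} (quoted from Katok--Hasselblatt, Theorem 9.3.7 and its proof, where the boundary circles are invariant). To keep your self-contained derivation you must additionally show that $F_{\varepsilon}$ cannot send points close to one boundary graph to points close to the other: for instance, periodicity (i) forces the induced annulus map to have degree $+1$ while symplecticity forces it to preserve orientation, and an annulus homeomorphism with these two properties preserves each of the two ends; combined with (iii) and the connectedness of thin collars $\{p^-(q)<p<p^-(q)+\delta\}$, this yields $F_{\varepsilon}(U^{\pm})\subset V^{\pm}$ with matching signs, which can then be iterated. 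Alternatively, simply invoke Lemma \ref{lemma:safe_space_boundary}, as the paper does.

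A secondary point: the uniformity in $\varepsilon$ on each $J_{\varepsilon_0}$ needs more than the word ``continuity''. The family is only assumed $\class^1$ on the open set $\mathbb A_{I,p^{\pm}}$, while the sets on which the twist-interval inequalities and the confinement must hold accumulate on the graphs of $p^{\pm}$ (and are unbounded in $p$ when $p^{\pm}=\pm\infty$); reducing to a compact fundamental domain in $q$, as you propose, does not by itself control the $p$-direction. The remedy is the one you half-state: keep strict margins $\alpha_0<\alpha_0'<m/n<\beta_0'<\beta_0$ and allow the neighborhoods to shrink when passing from $\varepsilon_0$ to nearby parameters (the same point arises in the paper when \eqref{equation:bounded_configuration_1}--\eqref{equation:bounded_configuration_2} are transferred from $F_{\varepsilon}$ to $F_{\varepsilon_j}$); you should also make sure the small sets $U^{\pm}_{\varepsilon_i}$, and not only $V^{\pm}_{\varepsilon_i}$, contain $\graph(p^{\pm}(\varepsilon,\cdot))$ for all $\varepsilon\in J_{\varepsilon_i}$, so that your final $U$ is indeed a neighborhood of the graphs. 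These are repairable details; the missing one-sided confinement above is the step that must be added.
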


From this result together with Remark \ref{remark:Birkhoff_theorem}, we immediately deduce the following result.

\begin{corollary}
\label{corollary:equiboundedness}
Under the assumption of Proposition \ref{proposition:compactness}, there is a constant $k>0$ {depending only on $\inf_{K}\partial_p (\pi_q \circ F)$} such that for any $\varepsilon \in I$, an $(m,n)$-periodic invariant graph $\Gamma$ of $F_{\varepsilon}$ is Lipschitz continuous with Lipschitz constant $k$.
\end{corollary}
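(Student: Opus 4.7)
The plan is to combine Proposition \ref{proposition:compactness} with the quantitative content of Birkhoff's theorem recalled in Remark \ref{remark:Birkhoff_theorem}. First, I would invoke Proposition \ref{proposition:compactness} to obtain the compact set $K\subset\mathbb A_{I,p^{\pm}}$ with the property that, for every $\varepsilon\in I$, every $(m,n)$-periodic orbit of $F_{\varepsilon}$ lies in $K$. In particular, if $\Gamma$ is an $(m,n)$-periodic invariant graph of $F_{\varepsilon}$, then by Proposition \ref{proposition:equivalence_invariance_minimizing} all its points are $(m,n)$-periodic and therefore $\{\varepsilon\}\times\Gamma\subset K$.

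Next, I would extract a uniform positive lower bound on the twist. Since $(\varepsilon,q,p)\mapsto F_{\varepsilon}(q,p)$ is $\class^1$-smooth on $\mathbb A_{I,p^{\pm}}$, the map
$$(\varepsilon,q,p)\longmapsto \partial_p(\pi_q\circ F_{\varepsilon})(q,p)$$
is continuous on $\mathbb A_{I,p^{\pm}}$ and, by the twist condition (ii) of Definition \ref{definition:general_symplectic_twist_map}, it is nowhere zero; up to replacing it by its absolute value we may assume (in the positive twist case, the negative case being analogous) that it is strictly positive. Since $K$ is compact, we obtain a constant
$$c:=\inf_{K}\partial_p(\pi_q\circ F)>0$$
which is independent of $\varepsilon$. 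By the previous paragraph, $\inf_{\Gamma}\partial_p Q\geq c$ for every $(m,n)$-periodic invariant graph $\Gamma$ of every $F_{\varepsilon}$.

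Finally, I would apply Birkhoff's theorem (Remark \ref{remark:Birkhoff_theorem}): every rotational invariant curve of $F_{\varepsilon}$, and in particular each such $\Gamma$, is the graph of a Lipschitz $1$-periodic map whose Lipschitz constant depends only on $\inf_{\Gamma}\partial_p Q$. Thus there is a constant $k=k(c)>0$, depending only on $c=\inf_{K}\partial_p(\pi_q\circ F)$, which is a valid Lipschitz constant for $\Gamma$ uniformly in $\varepsilon\in I$. There is no real obstacle here: the only nontrivial input is the localisation result of Proposition \ref{proposition:compactness}, which prevents the Lipschitz constant from blowing up through orbits drifting towards $\graph(p^{\pm})$, where the twist could degenerate.
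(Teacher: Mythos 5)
Your proposal is correct and follows exactly the paper's intended route: use the localisation result of Proposition \ref{proposition:compactness} to confine every $(m,n)$-periodic graph to the fixed set $K$, extract a uniform positive lower bound on $\partial_p(\pi_q\circ F)$ there, and feed this into the quantitative version of Birkhoff's theorem from Remark \ref{remark:Birkhoff_theorem}. The only small inaccuracy is the appeal to Proposition \ref{proposition:equivalence_invariance_minimizing} to assert that all points of $\Gamma$ are $(m,n)$-periodic — this is part of the definition of an $(m,n)$-periodic graph, not a consequence of that proposition — but this does not affect the argument.
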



In order to prove Proposition \ref{proposition:compactness}, we need the following Lemma, whose proof can be found in \cite[Theorem 9.3.7 and its proof]{KatokHasselblatt}. 

\begin{lemma}
\label{lemma:safe_space_boundary}
Let $F$ be an exact-symplectic twist map of the open annulus $\mathbb A_{p^{\pm}}$ and $m/n\in TI(F)$. Then, given $a,b\in TI(F)$ such that $a<m/n<b$, there exists a neighborhood $U_-$ of $\graph(p^-)$ and a neighborhood $U_+$ of $\graph(p^+)$ such that for any $k\in\{0,\ldots,n-1\}$
\begin{equation}
\label{equation:bounded_configuration_1}
\forall(q,p)\in U_-
\qquad \pi_q\circ F^{k+1}(q,p)-\pi_q\circ F^{k}(q,p)<a
\end{equation}
and
\begin{equation}
\label{equation:bounded_configuration_2}
\forall(q,p)\in U_+
\qquad \pi_q\circ F^{k+1}(q,p)-\pi_q\circ F^{k}(q,p)>b.
\end{equation}
\end{lemma}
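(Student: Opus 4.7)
The plan is to combine the openness of the twist interval $TI(F)$ with the boundary preservation property (iii), iterated a bounded number of times. The heuristic is that near each boundary component $F$ behaves like a rotation whose speed is arbitrarily close to the limiting value, so by shrinking the neighborhood I can force the first $n$ iterates of any point to remain in this ``boundary regime'' where the $TI$ estimate applies.

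First I would use that $TI(F)$ is an open interval. Since $a,b\in TI(F)$ with $a<m/n<b$, I can pick $a_0,b_0\in TI(F)$ with $a_0<a$ and $b_0>b$. Applying the definition of $TI(F)$ to $a_0$ and $b_0$ yields neighborhoods $V_-^{(0)}$ of $\graph(p^-)$ and $V_+^{(0)}$ of $\graph(p^+)$ such that
\begin{equation*}
(q,p)\in V_-^{(0)} \Longrightarrow \pi_q\circ F(q,p)-q\leq a_0<a,
\end{equation*}
and symmetrically for $V_+^{(0)}$ with $b_0>b$. Up to shrinking, I may assume $V_-^{(0)}$ and $V_+^{(0)}$ are disjoint, since the two boundary graphs are themselves disjoint continuous $1$-periodic curves.

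The second step builds inductively on $k$ a decreasing chain $V_-^{(0)}\supset V_-^{(1)}\supset\cdots\supset V_-^{(n-1)}$ of neighborhoods of $\graph(p^-)$ such that $F(V_-^{(k+1)}\cap\mathbb A_{p^{\pm}})\subseteq V_-^{(k)}\cap\mathbb A_{p^{\pm}}$, and symmetrically $V_+^{(k)}$. The input at each step is property (iii) applied to $V_-^{(k)}\cup V_+^{(k)}$, which is a neighborhood of the whole boundary; this provides a neighborhood of the boundary whose image lies in $V_-^{(k)}\cup V_+^{(k)}$, from which I extract the piece close to $\graph(p^-)$ to define $V_-^{(k+1)}$ (and analogously $V_+^{(k+1)}$). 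Setting $U_-:=V_-^{(n-1)}$ and $U_+:=V_+^{(n-1)}$, an immediate induction gives $F^k(U_\pm\cap\mathbb A_{p^{\pm}})\subseteq V_\pm^{(0)}\cap\mathbb A_{p^{\pm}}$ for $k=0,\ldots,n-1$. Applying the defining estimates of $V_\pm^{(0)}$ to the point $F^k(q,p)$ then yields the desired inequalities \eqref{equation:bounded_configuration_1} and \eqref{equation:bounded_configuration_2}.

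The step I expect to be the most delicate is the ``separation'' in the inductive construction: property (iii) controls the dynamics only near the union $\graph(p^-)\cup\graph(p^+)$, not near each component individually, so one has to rule out that a point extremely close to $\graph(p^-)$ is sent by $F$ into $V_+^{(k)}$. This is where the disjointness of $V_\pm^{(0)}$ is crucial, together with a continuity/connectedness argument: $F$, being a diffeomorphism of $\mathbb A_{p^{\pm}}$ satisfying (iii), extends continuously to each boundary component separately, so points sufficiently close to $\graph(p^-)$ are mapped close to $\graph(p^-)$ and thus land in $V_-^{(k)}$ rather than $V_+^{(k)}$. This is precisely the content worked out in \cite[Theorem 9.3.7 and its proof]{KatokHasselblatt}, to which the lemma can be directly pinned.
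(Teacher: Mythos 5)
Your overall scheme is exactly the standard one, and it is the argument the paper itself does not write out but delegates to \cite[Theorem 9.3.7]{KatokHasselblatt}: use openness of $TI(F)$ to get strict bounds $a_0<a$, $b_0>b$ with the corresponding neighborhoods $V_\pm^{(0)}$, then build a nested chain of neighborhoods via the boundary-preservation property (iii) so that the first $n-1$ iterates of $U_\pm$ stay in $V_\pm^{(0)}$, and conclude. The induction and the final estimate are fine as you set them up.

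The one step whose justification does not hold as written is the separation step you yourself flag. Property (iii) does \emph{not} imply that $F$ ``extends continuously to each boundary component separately'': it only says that points close to $\graph(p^-)\cup\graph(p^+)$ are mapped close to that union, and nothing in the axioms provides a boundary extension (the map may oscillate wildly while staying near the boundary). What you actually need is weaker --- that $F$ cannot send points near $\graph(p^-)$ into the neighborhood of $\graph(p^+)$ --- and this is true, but requires a different argument. One way: inside $U\cap V_-^{(k)}$ choose a \emph{connected} collar $N_-=\{(q,p)\,:\,p^-(q)<p<c(q)\}$ around the lower end (possible by periodicity and continuity of $p^-$); then $F(N_-)$ is connected and contained in the disjoint union $(V_-^{(k)}\cap\mathbb A_{p^{\pm}})\cup(V_+^{(k)}\cap\mathbb A_{p^{\pm}})$, hence lies entirely in one piece. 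To see it is the lower piece, note that $F$ preserves the two ends of the annulus: it is area- and orientation-preserving (since $dP\wedge dQ=dp\wedge dq$) and satisfies $F(q+m,p)=F(q,p)+(m,0)$, so the induced annulus map has degree $+1$ on $H_1$, whereas an end-swapping orientation-preserving homeomorphism of the annulus must reverse the core circle. Hence $F(N_-)$ is again a neighborhood of the lower end and must meet, so lie in, $V_-^{(k)}$; this closes your induction. With this replacement for the ``continuous extension'' claim, your proof is complete and coincides in substance with the cited Katok--Hasselblatt argument.
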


We can now prove Proposition \ref{proposition:compactness}.\\

\begin{proof}[Proof of Proposition \ref{proposition:compactness}]
Consider a decreasing sequence $U_j$ of neighborhoods of $\graph(p^-)\cup\graph(p^+)$ in $I\times\RR\times \RR$ such that $\cap_j U_j = \graph(p^-)\cup\graph(p^+)$ and assume by contradiction that there are two sequences $(\underline{x}^{(j)})_j$ and $(\varepsilon_j)_j$ such that for each $j$, $\underline{x}^{(j)}$ is an $(m,n)$-periodic orbits of $F_{\varepsilon_j}$ having a point in $(\{\varepsilon_j\}\times\mathbb A_{I,p^{\pm}}^{\varepsilon_j})\cap U_j$. 

Since $I$ is compact, we can assume that the sequence $(\varepsilon_j)_j$ converges to a certain $\varepsilon\in I$. Consider the sets $U_-$ and $U_+$ of Lemma \ref{lemma:safe_space_boundary} associated to the rotation number $m/n$ and the twist map $F_{\varepsilon}$. Define $U=U_-\cup U_+$. By continuity of $F$, $p^-$ and $p^-$, for $\varepsilon'$ sufficiently close to $\varepsilon$, $U$ remains a neighborhood of $\graph(p^-)(\varepsilon',\cdot)\cup\graph(p^+)(\varepsilon',\cdot)$ and Equations \eqref{equation:bounded_configuration_1} and \eqref{equation:bounded_configuration_2} are also satisfied by $F_{\varepsilon'}$. In particular, considering $\varepsilon'=\varepsilon_j$ for sufficiently large $j$, $(m,n)$-periodic orbits of $F_{\varepsilon_j}$ have their points in $\mathbb A_{I,p^{\pm}}^{\varepsilon_j}\setminus U$. {Now by the assumptions made on the sequence of $U_j$'s, if $j$ is sufficiently large $U_j\subset I\times U$, and the latter implies that $\underline{x}^{(j)}$ is an $(m,n)$-periodic orbit of $F_{\varepsilon_j}$ having a point in $U$.} This is contradictory and concludes the proof.
\end{proof}

\section{Proof of Main Theorem 1 (Theorem \ref{theorem:main2})}
\label{section:proof_main}

In this section, we prove  Main Theorem 1, namely Theorem \ref{theorem:main2}. Given a pair $(m,n) \in \ZZ\times \NN^*$ of coprime integers, we first recall some properties of the set 
$$\mathcal{I}_{(m,n)}(\RR) = \{\varepsilon\in I\,|\, F_{\varepsilon}\text{ has an }(m,n)\text{-periodic invariant graph}\}$$
defined in the statement of the theorem.


\begin{lemma}
\label{lemma:extension_lemma27}
Under the assumptions of Theorem \ref{theorem:main2}.
\begin{itemize}
\item[{\bf (i)}] For $\varepsilon\in \mathcal{I}_{(m,n)}(\RR)$, there exists a unique {$1$-periodic} continuous map $\gamma_{\varepsilon}:\RR\to\RR$ such that $\text{graph}(\gamma_{\varepsilon})$ is an $(m,n)$-periodic  graph invariant by $F_\varepsilon$.
\item[{\bf (ii)}] The map $(\varepsilon,q)\in \mathcal{I}_{(m,n)}(\RR)\times\RR\longmapsto\gamma_{\varepsilon}(q)$ is continuous (for the topology induced by $I\times\RR$ on $\mathcal{I}_{(m,n)}(\RR)\times\RR$).
\item[{\bf (iii)}] $\mathcal{I}_{(m,n)}(\RR)$ is a closed subset of $I$.
\end{itemize}
\end{lemma}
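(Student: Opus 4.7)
The plan is to treat the three items in order, with item (ii) carrying most of the weight; item (iii) will essentially fall out of the same compactness argument used for (ii).

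For \textbf{(i)}, existence of some $1$-periodic Lipschitz $\gamma_\varepsilon$ whose graph is $(m,n)$-periodic and invariant by $F_\varepsilon$ is built into the definition of $\mathcal I_{(m,n)}(\RR)$. Uniqueness is then a direct consequence of Proposition~\ref{proposition:equivalence_invariance_minimizing}(iv), which prohibits the coexistence of two $(m,n)$-periodic Lipschitz continuous graphs for the same twist map.

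The main step is \textbf{(ii)}. Fix $(\varepsilon_0,q_0)\in\mathcal I_{(m,n)}(\RR)\times\RR$ and let $(\varepsilon_j,q_j)\to(\varepsilon_0,q_0)$ with $\varepsilon_j\in\mathcal I_{(m,n)}(\RR)$. Choose a compact subinterval $I_0\subset I$ containing $\varepsilon_0$ and all $\varepsilon_j$ for $j$ large. Proposition~\ref{proposition:compactness} applied to the restricted family gives a compact set $K\subset\mathbb A_{I_0,p^{\pm}}$ which contains every $(m,n)$-periodic orbit of every $F_{\varepsilon}$ with $\varepsilon\in I_0$; in particular the graphs of $\gamma_{\varepsilon_j}$ lie in $K$, so the $\gamma_{\varepsilon_j}$ are uniformly bounded and bounded away from $p^{\pm}(\varepsilon_j,\cdot)$. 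Corollary~\ref{corollary:equiboundedness} then gives a uniform Lipschitz constant $k$, independent of $j$. By Arzelà--Ascoli, any subsequence of $(\gamma_{\varepsilon_j})$ has a further subsequence converging uniformly on compact sets to a $1$-periodic $k$-Lipschitz map $\gamma$ whose graph still lies in $K$. Using the joint continuity of $(\varepsilon,q,p)\mapsto F_\varepsilon(q,p)$, one passes to the limit in the identity $F_{\varepsilon_j}^n(q,\gamma_{\varepsilon_j}(q))=(q+m,\gamma_{\varepsilon_j}(q))$ to obtain
\[
F_{\varepsilon_0}^{\,n}(q,\gamma(q))=(q+m,\gamma(q))\qquad\forall\,q\in\RR,
\]
so $\mathrm{graph}(\gamma)$ is an $(m,n)$-periodic Lipschitz graph of $F_{\varepsilon_0}$. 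By the uniqueness part of (i), $\gamma=\gamma_{\varepsilon_0}$. Since every subsequence has a sub-subsequence converging to the same $\gamma_{\varepsilon_0}$, the whole sequence $\gamma_{\varepsilon_j}$ converges uniformly to $\gamma_{\varepsilon_0}$; combined with $q_j\to q_0$ and the uniform Lipschitz bound, this yields $\gamma_{\varepsilon_j}(q_j)\to\gamma_{\varepsilon_0}(q_0)$, i.e.\ joint continuity.

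For \textbf{(iii)}, let $\varepsilon_j\in\mathcal I_{(m,n)}(\RR)$ with $\varepsilon_j\to\varepsilon_\infty\in I$. Exactly the same compactness argument, applied on a compact subinterval containing $\varepsilon_\infty$, extracts a uniformly convergent subsequence $\gamma_{\varepsilon_j}\to\gamma_\infty$, and passing to the limit in the $n$-fold iterate gives an $(m,n)$-periodic invariant graph for $F_{\varepsilon_\infty}$; hence $\varepsilon_\infty\in\mathcal I_{(m,n)}(\RR)$, so $\mathcal I_{(m,n)}(\RR)$ is closed. The only delicate point in the whole argument is verifying that the limit graph still lies in the \emph{open} annulus $\mathbb A_{p^{\pm}}^{\varepsilon_\infty}$, i.e.\ that it does not collapse onto $\mathrm{graph}(p^{\pm})$; this is precisely what Proposition~\ref{proposition:compactness} rules out, since the graphs $\mathrm{graph}(\gamma_{\varepsilon_j})$ stay in the compact $K$ which is disjoint from a neighborhood of $\mathrm{graph}(p^-)\cup\mathrm{graph}(p^+)$.
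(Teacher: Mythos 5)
Your proposal follows the same strategy as the paper's proof: uniqueness from Proposition~\ref{proposition:equivalence_invariance_minimizing}, then for (ii) and (iii) the two-step compactness argument — localize on a compact subinterval, use Proposition~\ref{proposition:compactness} and Corollary~\ref{corollary:equiboundedness} to get equiboundedness and a uniform Lipschitz constant, apply Arzelà--Ascoli, pass to the limit in the $(m,n)$-periodicity identity, and conclude by uniqueness that every convergent subsequence has the same limit $\gamma_{\varepsilon_0}$. Your explicit remark that the limit graph stays in the open annulus (so it doesn't collapse onto $\mathrm{graph}(p^{\pm})$) and your final step deducing joint continuity of $(\varepsilon,q)\mapsto\gamma_\varepsilon(q)$ from uniform convergence plus the Lipschitz bound are both points the paper treats more tersely, but the argument is the same.
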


\begin{proof} 

\textbf{(i)} The existence of such $\gamma_\varepsilon$ follows from the definition of $\mathcal{I}_{(m,n)}(\RR)$. The unicity comes from Proposition \ref{proposition:equivalence_invariance_minimizing}. {Note that $\gamma_{\varepsilon}$ is Lipschitz continuous with a Lipschitz constant depending on $\varepsilon$, but which can be chosen uniformly for any $\varepsilon$ lying in a compact subinterval $I'\subset I$ by Corollary \ref{corollary:equiboundedness}. This property will be useful for the rest of the proof.}

\textbf{(ii)} Choose $\varepsilon\in\mathcal{I}_{(m,n)}(\RR)$ and a sequence $(\varepsilon_j)_j\subset\mathcal{I}_{(m,n)}(\RR)$ converging to $\varepsilon$. Let us show that $(\gamma_{\varepsilon_j})_j$ converges to $\gamma_{\varepsilon}$ in the uniform topology. {As noticed in item (i), each $\gamma_{\varepsilon_j}$ as well as $\gamma_{\varepsilon}$ is Lipschitz continuous, and they share the same Lipschitz constant ({\it i.e.}, they are equi-Lipschitz).}

We do the proof in two steps:

\textbf{$\bullet$ Step 1.} We prove that any subsequence of $(\gamma_{\varepsilon_j})_j$ converging in the space of $1$-periodic continuous maps $\RR\to\RR$ {for the uniform topology} has $\gamma_{\varepsilon}$ as a limit.

\textbf{$\bullet$ Step 2.} We show that $(\gamma_{\varepsilon_j})_j$ has at least one converging subsequence (applying Ascoli-Arzelà theorem).

{These two steps imply the assertion of item (ii)}.\\

\textit{Proof of Step 1.} Assume that there is a subsequence $(\gamma_{\varepsilon_{j_k}})_k$ converging to a {$1$-periodic} map $\gamma_{\infty}:\RR\to\RR$ {in the uniform topology}. By Proposition \ref{proposition:compactness}, $\graph({\gamma_{\infty}})\subset \mathbb A_{I,p^{\pm}}^{\varepsilon}$. The identity $F^n_{\varepsilon_{j_k}}=\text{Id}+(m,0)$ is satisfied on $\graph\gamma_{\varepsilon_{j_k}}$, hence, by continuity,  $F^n_{\varepsilon}=\text{Id}+(m,0)$ on $\graph ({\gamma_{\infty}})$. Let us mention that $\gamma_{\infty}$ is Lipschitz continuous since all $\gamma_{\varepsilon_{j_k}}$ are Lipschitz continuous with the same Lipschitz constant. Hence $\graph \gamma_{\infty}$ is a $(m,n)$-periodic graph of $F_{\varepsilon}$, which is invariant by Proposition \ref{proposition:equivalence_invariance_minimizing}. Therefore $\gamma_{\infty} = \gamma_{\varepsilon}$, which follows from the unicity of $(m,n)$-periodic graphs, again by Proposition \ref{proposition:equivalence_invariance_minimizing}.
%

\textit{Proof of Step 2.} Let us check that the assumptions of Ascoli-Arzelà theorem are satisfied by the sequence of maps $(\gamma_{\varepsilon_j})_j$. By Proposition \ref{proposition:compactness}, the maps $\gamma_{\varepsilon_j}$ are contained in a compact subset $K$ of $\mathbb A_{I,p^{\pm}}$ which implies that they are bounded by the same constant ({\it i.e.}, equibounded). Moreover, we have already noticed that they are Lipshitz continuous with a uniform Lipschitz constant. Hence Step 2 follows from Ascoli-Arzelà theorem, and therefore point (ii) is proven.
 
\textbf{(iii)} Consider a sequence of $\varepsilon_j\in\mathcal{I}_{(m,n)}(\RR)$  converging to a $\varepsilon\in I$. {As in item (ii), the family of maps
$\gamma_{\varepsilon_j}$ is equibounded and equi-Lipschitz.} By extracting a subsequence as in Step 2 of item (ii), we can suppose that $\gamma_{\varepsilon_j}$ converges to a $1$-periodic Lipschitz continuous map ${\gamma}_{\varepsilon}:\RR\to\RR$, such that $\{\varepsilon\}\times\graph(\gamma_{\varepsilon})\subset \mathbb A_{I,p^{\pm}}$ (Proposition \ref{proposition:compactness}), which is $(m,n)$-periodic by continuity of $F$ as a map of $(\varepsilon,q,p)$. By Proposition \ref{proposition:equivalence_invariance_minimizing}, this implies that $\varepsilon\in\mathcal{I}_{(m,n)}(\RR)$. \\
\end{proof}

\medskip

We can now prove the following stronger result on the topological structure of the set $\mathcal{I}_{(m,n)}(\RR)$.\\

\begin{lemma}
\label{lemma:non_empty_interior}
Under the assumptions of Theorem \ref{theorem:main2}, the set $\mathcal{I}_{(m,n)}(\RR)$ is either the whole $I$ or it has empty interior.
\end{lemma}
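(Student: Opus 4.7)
The plan is to show that $J := \mathrm{int}(\mathcal{I}_{(m,n)}(\RR))$ is closed in $I$; since $J$ is open by definition, and $I$ is connected, the alternative $J = \emptyset$ or $J = I$ then yields the lemma. The dichotomy ``$\mathcal I_{(m,n)}(\RR)$ is $I$ or has empty interior'' will then follow because $\mathcal I_{(m,n)}(\RR)$ itself is closed (Lemma \ref{lemma:extension_lemma27}(iii)): if $\mathcal{I}_{(m,n)}(\RR)$ has non-empty interior, then $J = I$, hence $\mathcal{I}_{(m,n)}(\RR) \supset J = I$.

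So assume $J \neq \emptyset$ and pick $\varepsilon^{\ast} \in \overline{J}\cap I$. Since $\mathcal I_{(m,n)}(\RR)$ is closed, $\varepsilon^{\ast} \in \mathcal I_{(m,n)}(\RR)$, and by Proposition \ref{proposition:equivalence_invariance_minimizing}(iii) there exists an analytic $(m,n)$-periodic invariant graph $\gamma_{\varepsilon^{\ast}}$ of $F_{\varepsilon^{\ast}}$. Set
$$R(\varepsilon,q,p) := \pi_q \circ F_{\varepsilon}^n(q,p) - q - m,$$
which is analytic on $\mathbb A_{I,p^{\pm}}$ by assumption (i) of Theorem \ref{theorem:main2}. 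Since the orbit through $(q,\gamma_{\varepsilon^{\ast}}(q))$ is minimal and has no conjugate points (Proposition \ref{proposition:equivalence_invariance_minimizing}(ii)), we have $\partial_p R(\varepsilon^{\ast},q,\gamma_{\varepsilon^{\ast}}(q))\neq 0$ for every $q \in \RR$. By the real-analytic implicit function theorem, for each $q_0 \in [0,1]$ one obtains a local analytic solution $p = p(\varepsilon,q)$ of $R(\varepsilon,q,p)=0$ near $(\varepsilon^{\ast},q_0,\gamma_{\varepsilon^{\ast}}(q_0))$; by compactness of $[0,1]$ and uniqueness of the IFT-solution, these local solutions glue into a single $1$-periodic (in $q$) analytic function $p(\varepsilon,q)$ defined on $U\times\RR$ for some connected open neighborhood $U \subset I$ of $\varepsilon^{\ast}$, with $p(\varepsilon^{\ast},q) = \gamma_{\varepsilon^{\ast}}(q)$.

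The core step is to upgrade the identity $\pi_q\circ F_{\varepsilon}^n(q,p(\varepsilon,q)) = q + m$ to full $(m,n)$-periodic invariance of the graph. Consider
$$G(\varepsilon,q) := \pi_p \circ F_{\varepsilon}^n(q,p(\varepsilon,q)) - p(\varepsilon,q),$$
which is real-analytic on $U \times \RR$. Pick any $\varepsilon_{0} \in U \cap J$ (non-empty because $\varepsilon^{\ast} \in \overline{J}$). For $\varepsilon \in U \cap J$ close enough to $\varepsilon_{0}$, the unique $(m,n)$-periodic invariant graph $\gamma_{\varepsilon}$ of Lemma \ref{lemma:extension_lemma27}(i) must coincide with the graph of $p(\varepsilon,\cdot)$ by local uniqueness in the IFT; hence $G(\varepsilon,q)=0$ for all such $\varepsilon$ and all $q$. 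Fixing $q \in \RR$, the function $\varepsilon \mapsto G(\varepsilon,q)$ is real-analytic on the connected open set $U$ and vanishes on an open subset, so by the identity principle it vanishes on all of $U$. Therefore $G \equiv 0$ on $U \times \RR$, which together with $R = 0$ means that $\mathrm{graph}(p(\varepsilon,\cdot))$ is an $(m,n)$-periodic Lipschitz graph of $F_{\varepsilon}$ for every $\varepsilon \in U$. Proposition \ref{proposition:equivalence_invariance_minimizing}(i) gives invariance, so $U \subset \mathcal I_{(m,n)}(\RR)$, and thus $\varepsilon^{\ast} \in J$.

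The main difficulty I anticipate is the passage from the $q$-component equation $R = 0$ (what the IFT directly produces) to the full invariance of the graph: the IFT alone produces a candidate graph for every parameter in $U$ but does not a priori guarantee that $F_{\varepsilon}^n$ fixes its $p$-component. The identity-principle argument applied to the auxiliary analytic function $G$ in the $\varepsilon$-variable is the clean way around this, and is really where the analyticity hypothesis on the family is used. The rest is bookkeeping: checking that the local IFT-patches glue on $\TT^1$ (compactness in $q$), identifying them with the graphs $\gamma_{\varepsilon}$ on $U\cap J$ (by uniqueness), and deducing that $J$ is clopen in $I$.
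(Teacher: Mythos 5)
Your overall strategy is the same as the paper's: apply the implicit function theorem to the $q$-component equation $R(\varepsilon,q,p)=\pi_q\circ F_\varepsilon^n(q,p)-q-m$ (the paper's $\Delta_1$), using absence of conjugate points, and then propagate the vanishing of the $p$-component defect $G$ (the paper's $\Delta_2$) by the identity principle in $\varepsilon$; the only structural difference is cosmetic, namely that you phrase the topology as ``the interior $J$ is clopen in $I$'' while the paper works with the supremum of a connected component of $\mathcal I_{(m,n)}(\RR)$ and derives a contradiction.

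There is, however, one step that is not justified as written: the identification of the actual invariant graphs with the IFT branch, i.e.\ the claim that for $\varepsilon\in U\cap J$ ``close enough to $\varepsilon_0$'' one has $\gamma_\varepsilon=p(\varepsilon,\cdot)$ ``by local uniqueness in the IFT''. The IFT uniqueness statement only concerns solutions of $R=0$ lying in a tube around $\mathrm{graph}(\gamma_{\varepsilon^\ast})$, and closeness of the parameter $\varepsilon$ to $\varepsilon_0$ (or even to $\varepsilon^\ast$) gives, by itself, no control on where the graph $\gamma_\varepsilon$ sits in the annulus: a priori $\gamma_\varepsilon$ could be an $(m,n)$-periodic graph far from that tube, in which case $G(\varepsilon,\cdot)$ need not vanish and the identity-principle step has nothing to start from. (Uniqueness of the periodic graph for each fixed $\varepsilon$, Proposition \ref{proposition:equivalence_invariance_minimizing}(iv), does not help here, since $p(\varepsilon,\cdot)$ is not yet known to be periodic.) What is missing is precisely the continuity statement of Lemma \ref{lemma:extension_lemma27}(ii): since $\varepsilon^\ast\in\mathcal I_{(m,n)}(\RR)$ by closedness, the graphs $\gamma_\varepsilon$ converge uniformly to $\gamma_{\varepsilon^\ast}$ as $\varepsilon\to\varepsilon^\ast$ within $\mathcal I_{(m,n)}(\RR)$, so for $\varepsilon\in J$ sufficiently close to $\varepsilon^\ast$ the graph $\gamma_\varepsilon$ does lie in the IFT tube and hence coincides with $p(\varepsilon,\cdot)$; since $J$ is open and accumulates at $\varepsilon^\ast$, this still provides an open interval of parameters on which $G$ vanishes. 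This is exactly the point where the paper's proof invokes ``the continuity of $\gamma$''. With that ingredient inserted (and the base point taken to be $\varepsilon^\ast$ rather than an arbitrary $\varepsilon_0$), your argument goes through and is essentially the paper's proof.
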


\begin{proof}
Assume that $\mathcal{I}_{(m,n)}(\RR)$ has non-empty interior, and let us show that $\mathcal{I}_{(m,n)}(\RR)=I$. Let us define $a:=\inf I$ and $b:=\sup I$, hence  $I\cap(a,b) = (a,b)$. 

Consider a connected component $A\subset\mathcal{I}_{(m,n)}(\RR)$ which is not reduced to a point. Let $\beta=\sup A$, and suppose that $\beta<b$. We will raise a contradiction. 

First note that necessarily $\beta\in\mathcal{I}_{(m,n)}(\RR)$ since $\beta\in I$ and the set $\mathcal{I}_{(m,n)}(\RR)$ is closed in $I$ by Lemma \ref{lemma:extension_lemma27}; therefore, $\beta\in A$.

Applying again Lemma \ref{lemma:extension_lemma27}, we can then define a family of Lipschitz continuous maps $(\gamma_{\varepsilon})_{\varepsilon\in A}$ such that for all $\varepsilon\in A$, $\text{graph}(\gamma_{\varepsilon})$ is an $(m,n)$-periodic  graph invariant by $F_{\varepsilon}$. Moreover the map {$\Gamma:(\varepsilon,q)\mapsto \gamma_{\varepsilon}(q)$} is continuous, again by Lemma \ref{lemma:extension_lemma27}. We will show that we can extend {$\Gamma$  to the open set $(\beta-r,\beta+r) \times \RR$}, with $r>0$,  thus leading to a contradiction, {since this would imply that $(\beta-r, \beta+r) \subseteq A$, thus contradicting the maximality of $A$}.


Let us apply the implicit function theorem to the map 
$$\Delta_1(\varepsilon,q,p):=\pi_q\circ F^n_{\varepsilon}(q,p)-q-m.$$
Since $F_{\beta}$ has no conjugate points on $\text{graph}(\gamma_{\beta})$ (see Proposition \ref{proposition:equivalence_invariance_minimizing}), $\partial_p\Delta_1$ do not vanish on the set $\{(\beta,q,\gamma_{\beta}(q))\,|\, q\in\RR\}$.
Hence we can define an analytic map $(\varepsilon,q)\in(\beta-r,\beta+r)\times\RR\mapsto\eta_{\varepsilon}(q)$, with $r>0$, such that $\eta_{\beta}=\gamma_{\beta}$ and for any $(\varepsilon,q,p)$ close to $(\beta,q,\eta_{\beta}(q))$ we have 
$$\pi_q\circ F_{\varepsilon}(q,p)=q+m\qquad\Leftrightarrow\qquad p=\eta_{\varepsilon}(q).$$

The latter implies -- together with the continuity of $\gamma$ -- that for $\varepsilon<\beta$ and sufficiently close to $\beta$, we have $\eta_{\varepsilon}=\gamma_{\varepsilon}$. 

Now consider, the map $\Delta_2:(\beta-r,\beta+r)\times\RR\to\RR$ defined by
$$(\varepsilon,q)\mapsto \pi_p\circ F^n_{\varepsilon}(q,\eta_{\varepsilon}(q))-\eta_{\varepsilon}(q).$$
Since $\eta_{\varepsilon}=\gamma_{\varepsilon}$ for $\beta-r<\varepsilon\leq\beta$ and $\graph \gamma_{\varepsilon}$ is $(m,n)$-periodic, $\Delta_2$ vanishes on $(\beta-r,\beta]\times\RR$. Since $\Delta_2$ is analytic and $(\beta-r,\beta+r)\times\RR$ is connected, $\Delta_2$ vanishes on $(\beta-r,\beta+r)\times\RR$. 

Combining the two results on $\Delta_1$ and $\Delta_2$, we obtain that, for any $\varepsilon\in(\beta-r,\beta+r)$, $\text{graph}(\eta_{\varepsilon})$ is an $(m,n)$-periodic graph of $F_{\varepsilon}$. This contradicts the fact that $\varepsilon$ cannot be bigger than $\beta$.
\end{proof}

\begin{proof}[Proof of Theorem \ref{theorem:main2}]
Let us suppose that $\mathcal{I}_{(m,n)}(\RR)$ has an accumulation point $\beta\in I$ and show that in this case $\mathcal{I}_{(m,n)}(\RR)=I$. The proof of Lemma \ref{lemma:non_empty_interior} can be adapted to this context: there is a sequence $(\varepsilon_n)_n$ converging to $\beta$ with $\varepsilon_n\neq\beta$ satisfying 
$$\forall(n,q)\in\NN\times\RR\qquad\qquad\Delta_2(\varepsilon_n,q)=0.$$
Hence $\Delta_2$ is flat at any $(\beta,q)\in\{\beta\}\times\RR$, meaning that its partial derivatives of any order in $\varepsilon$ and $q$ vanish. By analyticity of $\Delta_2$ and $1$-periodicity in $q$, it vanishes on an open set of the form $J\times\RR$. Hence for $\varepsilon$ sufficiently close to $\beta$, $\eta_{\varepsilon}$ parametrizes an $(m,n)$-periodic $F_{\varepsilon}$-invariant graph. We conclude that $\mathcal{I}_{(m,n)}(\RR)$ has non-empty interior, and by Lemma \ref{lemma:non_empty_interior} that $\mathcal{I}_{(m,n)}(\RR)=I$.
\end{proof}

\section{Proof of Theorem \ref{theorem:birkhoff_billiard}}
\label{section:proof_inner_billiard}

Let $(\Omega_{\varepsilon})_{\varepsilon\in I}$ be an analytic family of strictly convex domains with analytic boundary. By applying homotheties to the different $\Omega_{\varepsilon}$, we can suppose that each $\Omega_{\varepsilon}$ has perimeter $1$ (note that homotheties do not break the property of the billiard map in a domain of having an $(m,n)$-periodic invariant graph of a fixed rotation number). 

For each $\varepsilon\in I$, one can consider a parametrization $s\in\RR\mapsto\gamma_{\varepsilon}(s)$ of $\gamma_{\varepsilon}$ by arc-length which by assumption one can assume analytic in $(\varepsilon,s)$. The billiard map inside $\Omega_{\varepsilon}$ induces an exact symplectic twist map $F_{\varepsilon}:\RR\times(-1,1)\to\RR\times(-1,1)$ defined for all $(s,\sigma)\in \RR\times(-1,1)$ by
$$F_{\varepsilon}(s,\sigma)=(s_1,\sigma_1)$$
where, if $\sigma=-\cos\varphi$ and $\varphi\in(0,\pi)$, then $s_1$ and $\sigma_1$ are defined by following requirements:
\begin{itemize}
\item[({\it a})] $\gamma_{\varepsilon}(s_1)$ is the second point of intersection with $\partial\Omega_{\varepsilon}$ of the line $\ell$ passing through $\gamma_{\varepsilon}(s)$ and making an angle $\varphi$ with the tangent line of $\partial\Omega$ at $\gamma_{\varepsilon}(s)$;
\item[({\it b})] the line $\ell$ makes an angle $\varphi_1$ with the tangent line of $\partial\Omega$ at $\gamma_{\varepsilon}(s_1)$ and this defines $\sigma_1=-\cos\varphi_1$.
\end{itemize}

Let us check that the corresponding family of billiard maps $(F_{\varepsilon})_{\varepsilon \in I}$ satisfies the assumptions of Theorem \ref{theorem:main2}. 

It is well-known that the billiard map written in the coordinates $(s,\sigma)$ is an exact symplectic twist map as defined in Definition \ref{definition:general_symplectic_twist_map}.

Given $\varepsilon\in I$ and $(s,\varphi)\in\RR\times(-1,1)$, the corresponding line $\ell$ (as in ({\it a})) is transverse to the boundary of $\Omega_{\varepsilon}$ at $\gamma_{\varepsilon}(s_1)$. Applying the implicit function theorem, one deduces that there exist a neighborhood $U$ of $(\varepsilon,s,\varphi)\in I\times \RR\times(-1,1)$ and an analytic map $\varphi:U\to\RR$ such that $s_1(\varepsilon',s',\sigma')=\varphi(\varepsilon',s',\sigma')$ for $(\varepsilon',s',\sigma')\in U$. The analytic regularity of $\sigma_1$ comes from the fact that it can be written as 
$$\sigma_1 = \frac{\gamma_{\varepsilon}(s_1)-\gamma_{\varepsilon}(s)}{\|\gamma_{\varepsilon}(s_1)-\gamma_{\varepsilon}(s)\|}\cdot \gamma_{\varepsilon}'(s_1)$$
where $u\cot v$ denotes the scalar product of two vectors $u$ and $v$. Hence Assumption (i) of Theorem \ref{theorem:main2} is satisfied.

It is well-known that since each $\Omega_{\varepsilon}$ is strictly convex, the map $F_{\varepsilon}$ extends to a continuous map $\RR\times[-1,1]$ satisfying for any $s\in\RR$ the equalities $F_{\varepsilon}(s,0)= (s,0)$ and $F_{\varepsilon}(s,1)=(s+1,1)$. This implies that $\text{TI}(F_{\varepsilon})=(0,1)$.

Hence the assumptions of Theorem \ref{theorem:main2} are satisfied and it implies the result.

\section{Proof of Theorem \ref{theorem:outer_billiard}}
\label{section:proof_outer_billiard}

Let $(\Omega_{\varepsilon})_{\varepsilon\in I}$ be an analytic family of strictly convex domains with analytic boundary. We introduce the so-called \textit{envelope coordinates} on each $\partial\Omega_{\varepsilon}$, see \cite{Boyland}. They are defined as follows. 

By applying translations to the domains, one can assume that there is a point $O$ which remains inside all domains. Consider a fixed direction $Ox$. For each $\varepsilon\in I$ and any angle $\theta\in\RR$, one can associate the oriented line $\mathcal L_{\theta}$ to $\partial\Omega$, which makes an angle $\theta+\pi/2$ with $Ox$ and is tangent to it at a point $\alpha(\theta)$ where the orientations of $\partial\Omega$ and  $\mathcal L_{\theta}$ are the same. Let $p_{\varepsilon}(\theta)$ be the distance from $O$ to $\mathcal L_{\theta}$. Under the assumptions of Theorem \ref{theorem:outer_billiard}, one can assume that $p$ is analytic in $(\varepsilon,\theta)$.

For any point $p\in\RR^2\setminus\Omega$, one can find a unique $\theta$ such that the vector $\alpha(\theta)-z$ and $\mathcal{L}_{\theta}$ are collinear and with the same orientation. The pair $(\theta,\gamma)$ where $\gamma=\|\alpha(\theta)-z\|^2/2$ is called the envelope coordinate of $p$ and uniquely determines $p$.

The outer billiard map outside $\Omega_{\varepsilon}$ acts on the space of enveloppe coordinates $\RR\times(0,+\infty)$ and induces an exact symplectic twist map $F_{\varepsilon}:\RR\times(0,+\infty)\to\RR\times(0,+\infty)$ defined for all $(\theta,\gamma)\in \RR\times(0,+\infty)$ by
$$F_{\varepsilon}(\theta,\gamma)=(\theta_1,\gamma_1).$$

Let us check that the corresponding family of outer billiard maps $(F_{\varepsilon})_{\varepsilon \in I}$ satisfies the assumptions of Theorem \ref{theorem:main2}. 

Given $\varepsilon\in I$ and $p\in\RR^2\setminus\overline\Omega_{\varepsilon}$, consider $G_{\varepsilon}(p)$ to be the image of $p$ after one outer billiard reflection on $\Omega_{\varepsilon}$. The map $G$ is well-defined and analytic. It is then a consequence of the implicit function theorem that the enveloppe coordinates of a point $q$ depend analytically on $q$. Hence Assumption (i) of Theorem \ref{theorem:main2} is satisfied.

It is well-known that the twist interval of a dual billiard map is $\text{TI}(F_{\varepsilon})=(0,1/2)$. Hence the assumptions of Theorem \ref{theorem:main2} are satisfied and it implies the result.

{
\section{Proof of Theorem \ref{theorem:symplectic_billiard}}
\label{section:proof_symplectic_billiard}

Let us first recall some basic properties of symplectic billiards; we refer to  \cite{AlbersTabachnikov} for more details. Consider a strictly convex bounded domain $\Omega$ with smooth boundary and parametrized by a periodic map $\gamma:\RR\to\RR^2$; without loss of generality, let us assume here that the period is $1$. Given $t\in[0,1)$, there is a unique $t^{\ast}\in[0,1)$ different from $t$ and such that the tangent lines to $\partial\Omega$ at $\gamma(t)$ and at $\gamma(t^{\ast})$ are parallel.

The symplectic billiard map is given  by a map 
\begin{eqnarray*}
F:  \mathcal P&\to&\mathcal P\\
(t_1,t_2)&\mapsto&(t_2,t_3)
\end{eqnarray*}
where $F(t_1,t_2)=(t_2,t_3)$ if and only if $\gamma(t_1)$, $\gamma(t_2)$, and $\gamma(t_3)$ are successive reflection points and $\mathcal P$ is the set
$$\mathcal P = \{(t_1,t_2)\in\RR^2\,|\,t_1<t_2<t_1^{\ast}\}.$$
Let $\omega(x,y) = dx\wedge dy$ the canonical symplectic form on $\RR^2_{(x,y)}$. The map $S:\mathcal P\to\RR$ defined by 
$$S(t_1,t_2) = -\omega(\gamma(t_1),\gamma(t_2))$$
is a generating map for $S$ in the following sense: for any $(t_1,t_2)\in\mathcal P$ and $(t_2,t_3)\in\mathcal P$, 
\begin{equation}
\label{equation:generating_symplectic}
F(t_1,t_2)=(t_2,t_3)\quad\Longleftrightarrow\quad \partial_2S(t_1,t_2)+\partial_1S(t_2,t_3)=0.
\end{equation}

Let us relate the map $F$, the space $\mathcal P$ and the map $S$ to our  Definition \ref{definition:general_symplectic_twist_map}. Observe that the map
$\varphi$ defined by $\varphi(t_1,t_2) = (t_1,s_1)$ where 
$$s_1 = -\partial_1S(t_1,t_2) = \omega(\gamma'(t_1),\gamma(t_2))$$
is a diffeomorphism from $\mathcal P$ on its image $\varphi(\mathcal P)$, which is simply 
$\mathbb{A}_{s^{\pm}}$, with 
$$s^-(t) = \omega(\gamma'(t),\gamma(t))
\qquad\text{and}\qquad
s^+(t) = \omega(\gamma'(t),\gamma(t^{\ast})).$$
Therefore $F$ can be rewritten in coordinates $(t,s)$ as a map of the space $\mathbb{A}_{s^{\pm}}$. If $F(t_1,s_1)=(t_2,s_2)$, then 
$$\frac{\partial t_2}{\partial s_1} = \frac{1}{\omega(\gamma'(t_1),\gamma'(t_2))} >0,$$
hence the twist condition is satisfied. Finally $S$ is a generating map in the sense of Definition \ref{definition:general_symplectic_twist_map} since 
$$s_2dt_2 - s_1dt_1 = d(\omega(\gamma(t_1),\gamma(t_2))) = dS,$$
as it follows from \eqref{equation:generating_symplectic}. We deduce the following:

\begin{proposition}
\label{proposition:setting_symplectic_billiard_map}
The symplectic billiard dynamics can be modelled by the exact-symplectic twist-map $F:\mathbb{A}_{s^{\pm}}\to\mathbb{A}_{s^{\pm}}$.
\end{proposition}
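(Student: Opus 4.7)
The plan is to verify each of the four defining properties of an exact-symplectic twist map listed in Definition \ref{definition:general_symplectic_twist_map}, namely periodicity, the twist condition, boundary preservation, and the existence of a generating function, applied to the map $F$ on $\mathbb{A}_{s^{\pm}}$ obtained from the change of variables $\varphi(t_1,t_2)=(t_1,s_1)$ with $s_1=\omega(\gamma'(t_1),\gamma(t_2))$. Two of these properties (the twist condition and the generating function) are essentially already computed in the paragraphs preceding the statement, so the plan reduces to organizing those computations and filling in the remaining two verifications.

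First, I would record the periodicity property (i). Since $\gamma:\RR\to\RR^{2}$ satisfies $\gamma(t+1)=\gamma(t)$ and hence $\gamma'(t+1)=\gamma'(t)$, both boundary maps $s^{\pm}$ are $1$-periodic in $t$, so $\mathbb{A}_{s^{\pm}}$ is invariant under the shift $(t,s)\mapsto(t+m,s)$ for $m\in\ZZ$. The reflection rule defining $F$ on $\mathcal P$ depends only on $\gamma$, hence $F(t_1+m,t_2+m)=F(t_1,t_2)+(m,m)$ on $\mathcal P$; transporting by $\varphi$ (which is itself equivariant in the first coordinate) gives $F(t+m,s)=F(t,s)+(m,0)$ on $\mathbb{A}_{s^{\pm}}$. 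Next, for the twist condition (ii), I would cite the formula already computed in the text,
\[
\frac{\partial t_2}{\partial s_1}=\frac{1}{\omega(\gamma'(t_1),\gamma'(t_2))},
\]
and note that strict convexity of $\Omega$ ensures $\gamma'(t_1)$ and $\gamma'(t_2)$ are not collinear when $t_1\neq t_2$ and $t_2\in(t_1,t_1^{\ast})$, so the denominator never vanishes and in fact has constant sign. This gives $\partial_{s_1}t_2>0$, hence the map $s_1\mapsto t_2(t_1,s_1)$ is a diffeomorphism onto its image for each fixed $t_1$.

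For the generating function (iv), I would verify the identity $s_2\,dt_2-s_1\,dt_1=dS$ with $S(t_1,t_2)=-\omega(\gamma(t_1),\gamma(t_2))$ by direct differentiation: $\partial_1S=-\omega(\gamma'(t_1),\gamma(t_2))=-s_1$ from the definition of $s_1$, and using \eqref{equation:generating_symplectic} one gets $\partial_2S(t_1,t_2)=-\partial_1 S(t_2,t_3)=\omega(\gamma'(t_2),\gamma(t_3))=s_2$. Periodicity of $S$ in the sense of Definition \ref{definition:general_symplectic_twist_map}(iv) follows from $\gamma(t+1)=\gamma(t)$ together with the bilinearity of $\omega$. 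The domain $\mathcal{D}$ in the statement of the definition is taken to be $\varphi_2(\mathcal P)$, where $\varphi_2(t_1,t_2)=(t_1,t_2)$, which is open and stable under the diagonal $\ZZ$-action.

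The main work, and the step I expect to be most subtle, is the boundary preservation property (iii). Geometrically, the boundary $\{s=s^-(t)\}$ corresponds to the degenerate case $t_2=t_1$, while $\{s=s^+(t)\}$ corresponds to the opposite-tangency case $t_2=t_1^{\ast}$. I would argue as follows: if a sequence $(t_1^{(n)},s_1^{(n)})$ approaches $\mathrm{graph}(s^-)$, then by definition of $s^-$ and strict convexity the associated $t_2^{(n)}$ tends to $t_1^{(n)}$ (the chord from $\gamma(t_1^{(n)})$ with direction forced by $s_1^{(n)}\to s^-(t_1^{(n)})$ degenerates to the tangent line), and correspondingly $t_3^{(n)}$ must also tend to $t_1^{(n)}$, since the chord $\gamma(t_1^{(n)})\gamma(t_3^{(n)})$ is required to be parallel to $\gamma'(t_2^{(n)})\to\gamma'(t_1^{(n)})$, a direction transverse only to the tangent line at $\gamma(t_1^{(n)})$. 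Hence $s_2^{(n)}=\omega(\gamma'(t_2^{(n)}),\gamma(t_3^{(n)}))\to s^-(t_2^{(n)})$, so $F(t_1^{(n)},s_1^{(n)})$ approaches $\mathrm{graph}(s^-)$. The analogous argument at $s^+$ uses that $(t_1^{\ast})^{\ast}=t_1$, so that when $t_2\to t_1^{\ast}$, the point $t_3$ tends to $t_1$ and $s_2\to s^+(t_2)$. Translating the two limiting statements into the quantifier-form of (iii) (``for every neighborhood $V$ of the union of graphs there exists $U$ with $F(U\cap\mathbb{A}_{s^{\pm}})\subseteq V\cap\mathbb{A}_{s^{\pm}}$'') is then a routine continuity argument using periodicity to reduce to compact pieces. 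Combining the four verified properties yields the proposition.
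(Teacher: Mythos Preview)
Your proposal is correct and follows the same approach as the paper, namely a direct verification of the axioms in Definition~\ref{definition:general_symplectic_twist_map} via the change of variables $\varphi$; the paper in fact presents no separate proof but simply records the twist formula $\partial t_2/\partial s_1=1/\omega(\gamma'(t_1),\gamma'(t_2))>0$ and the identity $s_2\,dt_2-s_1\,dt_1=dS$ in the paragraphs preceding the statement and then declares the proposition. Your write-up is more complete than the paper's, since you also spell out periodicity (i) and the boundary-preservation property (iii), which the paper leaves entirely implicit; your geometric limit argument for (iii) (that $t_2\to t_1$ forces $t_3\to t_1$, and $t_2\to t_1^{\ast}$ forces $t_3\to t_2^{\ast}$ via $(t_1^{\ast})^{\ast}=t_1$) is the natural one and is correct.
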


Let $(\Omega_{\varepsilon})_{\varepsilon\in I}$ be an analytic family of strictly convex domains with analytic boundary. By applying homotheties to the different $\Omega_{\varepsilon}$, we can suppose that each $\Omega_{\varepsilon}$ has perimeter $1$ (note that homotheties do not break the property of the symplectic billiard map in a domain of having an $(m,n)$-periodic invariant graph of a fixed rotation number). 

Let us denote by $(\varepsilon,t)\in I\times\RR\mapsto \gamma_{\varepsilon}(t)$ an analytic map which is $1$-periodic in $t$ and such that for any $\varepsilon$, $\gamma_{\varepsilon}$ parametrizes $\partial\Omega_{\varepsilon}$. Following Proposition \ref{proposition:setting_symplectic_billiard_map}, this induces a family of maps $s_{\varepsilon}^{\pm}:\RR\to\RR$ and of exact-symplectic twist maps $F_{\varepsilon}:\mathbb{A}_{s^{\pm}_{\varepsilon}}\to\mathbb{A}_{s^{\pm}_{\varepsilon}}$.

It can be checked using the implicit function theorem that the family $(F_{\varepsilon})_{\varepsilon \in I}$ is analytic and defined on $\mathbb{A}_{I,s^{\pm}}$. Moreover the twist interval of the symplectic billiard map is $(0,1)$ by the same continuity argument as for the classical billiard (see Section \ref{section:proof_inner_billiard}).

Hence the assumptions of Theorem \ref{theorem:main2} are satisfied and this implies the result.}

\section{Proofs of Main Theorem 2 (Theorems \ref{theorem:main_Banach}) and Theorem \ref{theorem:main_Banach_billiard}}
\label{section:proof_main_Banach}

Let us recall that $\mathscr T^{\omega}$ is the set of analytic twist-maps of $\mathbb A_{p^{\pm}}$ which admit a continuous extension to the closure $\overline{\mathbb A_{p^{\pm}}}$; the set $\mathscr T_{m,n}^{\omega}$ consists of analytic twist-maps of $\mathscr T^{\omega}$ having an $(m,n)$-periodic invariant graph. 
For any $F\in \mathscr T_{m,n}^{\omega}$ there is an analytic $1$-periodic map $\gamma_{F}:\RR\to\RR$ such that $\text{graph}(\gamma_{F})$ is the unique $(m,n)$-periodic invariant graph of the map $F$.

\begin{lemma}
\label{lemma:Banach_closedness}
The set $\mathscr T_{m,n}^{\omega}$ is closed in $\mathscr T^{\omega}$.
\end{lemma}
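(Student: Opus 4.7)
The plan is to establish sequential closedness. Let $(F_k)_{k\geq 1}\subset\mathscr T^{\omega}_{m,n}$ converge to some $F\in\mathscr T^{\omega}$, and for each $k$ denote by $\gamma_{F_k}$ the unique $(m,n)$-periodic invariant graph of $F_k$, whose uniqueness follows from Proposition \ref{proposition:equivalence_invariance_minimizing}(iv). The goal is to exhibit an $(m,n)$-periodic invariant graph for $F$, showing $F\in\mathscr T^{\omega}_{m,n}$.

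The main idea is to regard the sequence together with its limit as a continuous family over a compact parameter set, and then invoke the machinery of Section \ref{section:invariant_graphs}. Set $I:=\{0\}\cup\{1/k\,|\,k\geq 1\}$, $G_0:=F$ and $G_{1/k}:=F_k$; by convergence in $\mathscr T^{\omega}$ the map $(\varepsilon,q,p)\mapsto G_{\varepsilon}(q,p)$ is continuous on $I\times\mathbb A_{p^{\pm}}$, and even $\class^1$-smooth on compact subsets, since analytic convergence on the closure controls all derivatives on compacts by Cauchy estimates. I would then verify $m/n\in TI(G_{\varepsilon})$ for every $\varepsilon\in I$: for $\varepsilon=1/k$ this is forced by the existence of the invariant graph $\gamma_{F_k}$, and for $\varepsilon=0$ it follows by passing the defining one-sided inequalities to the limit, using uniform convergence $F_k\to F$ on $\overline{\mathbb A_{p^{\pm}}}$.

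With these verifications, Proposition \ref{proposition:compactness} applied to $(G_{\varepsilon})_{\varepsilon\in I}$ supplies a common compact $K\subset\mathbb A_{p^{\pm}}$ containing every $\graph(\gamma_{F_k})$, and Corollary \ref{corollary:equiboundedness} provides a uniform Lipschitz constant for the family $(\gamma_{F_k})$. Thus $(\gamma_{F_k})$ is equibounded and equicontinuous, so Ascoli--Arzel\`a yields a subsequence $\gamma_{F_{k_j}}$ converging uniformly to a $1$-periodic Lipschitz map $\gamma_{\infty}$ with $\graph(\gamma_{\infty})\subset K$. Passing to the limit in the identity $F_{k_j}^n(q,\gamma_{F_{k_j}}(q))=(q+m,\gamma_{F_{k_j}}(q))$, which uses uniform convergence of $F_{k_j}$ and all its iterates on $K$, gives $F^n(q,\gamma_{\infty}(q))=(q+m,\gamma_{\infty}(q))$ for every $q\in\RR$; hence $\graph(\gamma_{\infty})$ is an $(m,n)$-periodic Lipschitz graph of $F$. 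Proposition \ref{proposition:equivalence_invariance_minimizing} then promotes it to an invariant analytic graph, so $F\in\mathscr T^{\omega}_{m,n}$.

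The main obstacle I anticipate is the verification $m/n\in TI(F)$, since a priori the neighborhoods of $\graph(p^{\pm})$ witnessing $m/n\in TI(F_k)$ may shrink with $k$. I would address this by using that the invariant graph $\gamma_{F_k}$ itself partitions $\mathbb A_{p^{\pm}}$ and, via the argument underlying Lemma \ref{lemma:safe_space_boundary}, produces fixed-size neighborhoods of $\graph(p^{\pm})$ where $\pi_q\circ F_k(q,p)-q$ is one-sidedly controlled relative to $m/n$; these controls survive uniform convergence on the closure and yield $m/n\in TI(F)$. Once this step is secured, the rest is a routine limiting procedure closely mirroring the proof of Lemma \ref{lemma:extension_lemma27}(iii).
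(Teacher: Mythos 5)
Your overall scheme (localize the graphs $\graph(\gamma_{F_k})$ uniformly away from $\graph(p^-)\cup\graph(p^+)$, extract a uniform Lipschitz constant via Remark \ref{remark:Birkhoff_theorem}, apply Ascoli--Arzel\`a, pass to the limit in $F_k^n(q,\gamma_{F_k}(q))=(q+m,\gamma_{F_k}(q))$, and conclude with Proposition \ref{proposition:equivalence_invariance_minimizing}) is exactly the paper's. The gap is in how you obtain the uniform localization. You route it through Proposition \ref{proposition:compactness} applied to the family $\{F\}\cup\{F_k\}$, which obliges you to verify $m/n\in TI(G_\varepsilon)$ for every parameter, including the limit $\varepsilon=0$, and your justifications do not hold. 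First, the existence of the invariant graph $\gamma_{F_k}$ does not by itself force $m/n\in TI(F_k)$: the twist interval is defined by uniform one-sided bounds on the one-step displacement $\pi_q\circ F(q,p)-q$ on neighborhoods of $\graph(p^{\pm})$, while an interior periodic graph only controls sums of displacements along its orbits. Second, and decisively, even when $m/n\in TI(F_k)$ for all $k$, the witnessing neighborhoods may shrink with $k$ and the property need not pass to the limit; in particular the graphs $\gamma_{F_k}$ cannot produce the ``fixed-size neighborhoods'' your last paragraph invokes. Concretely, on $\RR\times(0,1)$ take $F_k(q,p)=(q+p+\tfrac12-\tfrac1k,\,p)$ and $(m,n)=(1,2)$: each $F_k$ is an analytic exact-symplectic twist map with $(1,2)$-periodic invariant graph $\{p=1/k\}$ and $\tfrac12\in TI(F_k)=(\tfrac12-\tfrac1k,\tfrac32-\tfrac1k)$, but the only neighborhoods of $\graph(p^-)$ on which the displacement is $\leq\tfrac12$ are contained in $\{p\leq 1/k\}$, the graphs sink to the boundary, and the limit $F(q,p)=(q+p+\tfrac12,p)$ has no $(1,2)$-periodic graph and satisfies $\tfrac12\notin TI(F)$. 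So no argument that extracts the boundary control from the maps $F_k$ and their graphs can close the proof.

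The paper argues in the opposite direction: it applies Lemma \ref{lemma:safe_space_boundary} to the \emph{limit} map $F$ (thereby using that $m/n$ lies in the twist interval of $F$ — the analogue of hypothesis (ii) of Theorem \ref{theorem:main2}) and deduces that the $(m,n)$-periodic points of every map $G$ sufficiently close to $F$ avoid one fixed neighborhood $U$ of $\graph(p^-)\cup\graph(p^+)$; the example above shows that this input on the limit map is genuinely needed and cannot be recovered along your route. Once that localization is secured, the rest of your argument coincides with the paper's. A minor additional point: Proposition \ref{proposition:compactness} is stated for a compact \emph{interval} of parameters and a $\class^1$ family, so applying it to the parameter set $\{0\}\cup\{1/k\,|\,k\geq1\}$ would in any case require rerunning its proof (which only uses compactness and continuity) rather than citing it.
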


\begin{proof}
Let $(F_k)_k$ be a sequence of maps in $\mathscr T_{m,n}^{\omega}$ converging to a map $F\in \mathscr T^{\omega}$. As a consequence of Lemma \ref{lemma:safe_space_boundary}, one can find an open neighborhood $U$ of $\graph(p^-)\cup\graph(p^+)$ in $\mathbb A_{p^{\pm}}$ such that for any map $G$ close to $F$ and for any pair $(q,p)\in\mathbb A_{p^{\pm}}$ satisfying $G^n_{\Omega}(q,p)=(q+m,p)$ has the property
$$(q,p)\in \mathbb A_{p^{\pm}}\setminus U.$$
This fact together with Remark \ref{remark:Birkhoff_theorem} imply that for $k$ large enough the graph of the map $\gamma_{F_k}$ lies in the set $\mathbb A_{p^{\pm}}\setminus U$;  hence that the maps $\gamma_{F_k}$ are Lipschitz-continous with the same Lipshitz constant. Ascoli-Arzelà theorem applies: there is a subsequence $(\gamma_{F_{n_k}})_k$ converging to a continuous $1$-periodic map $\gamma_{\infty}:\RR\to \RR$ whose graph  also lies in $\mathbb A_{p^{\pm}}\setminus U$ and satisfies the following identity
$$\forall q\in\RR\qquad F^n(q,\gamma_{\infty}(q))=(q+m,\gamma_{\infty}(q)).$$
Proposition \ref{proposition:equivalence_invariance_minimizing} implies then that $F$ has an $(m,n)$-periodic invariant graph, hence $F\in \mathscr T_{m,n}^{\omega}$.
\end{proof}

\begin{lemma}
\label{lemma:Banach_continuity_graphs}
The map $(F,q)\in \mathscr T_{m,n}^{\omega}\times \RR\mapsto\gamma_{F}(q)\in\RR$ is continuous for the induced topology.
\end{lemma}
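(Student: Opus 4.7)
The plan is to reduce joint continuity to uniform convergence of $\gamma_{F_k}$ on each compact set in $\RR$, by recycling the compactness/equi-Lipschitz argument already used in the proof of Lemma \ref{lemma:Banach_closedness}. Concretely, suppose $(F_k, q_k) \to (F_0, q_0)$ in $\mathscr T^{\omega}_{m,n}\times\RR$. I first want to show that $\gamma_{F_k}\to\gamma_{F_0}$ uniformly on $\RR$, and then combine this with the continuity of $\gamma_{F_0}$ to conclude.

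First I would apply Lemma \ref{lemma:safe_space_boundary} to $F_0$ (whose twist interval contains $m/n$ since $F_0\in\mathscr T^{\omega}_{m,n}$) to produce neighborhoods $U_\pm$ of $\graph(p^\pm)$ such that no $(m,n)$-periodic configuration of $F_0$ reaches $U_-\cup U_+$. By continuity of the map $G\mapsto G^k$ in $\mathscr T^{\omega}$, the estimates \eqref{equation:bounded_configuration_1} and \eqref{equation:bounded_configuration_2} persist for $F_k$ when $k$ is large. Hence there is a compact $K\subset\mathbb A_{p^{\pm}}$ such that $\graph(\gamma_{F_k})\subset K$ for all sufficiently large $k$, and $\graph(\gamma_{F_0})\subset K$. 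In particular, $\inf_K \partial_p(\pi_q\circ F_k) > 0$ uniformly in $k$ (up to shrinking and using convergence $F_k\to F_0$ in $\mathscr T^{\omega}$), so by Remark \ref{remark:Birkhoff_theorem} the family $\{\gamma_{F_k}\}$ is equi-Lipschitz.

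With equiboundedness and equi-Lipschitz continuity, Ascoli-Arzelà gives that any subsequence of $(\gamma_{F_k})_k$ admits a further subsequence $(\gamma_{F_{k_j}})_j$ converging uniformly on every compact set (and hence on $\RR$ by $1$-periodicity) to some Lipschitz $1$-periodic $\gamma_\infty$. Passing to the limit in the identity $F_{k_j}^n(q,\gamma_{F_{k_j}}(q)) = (q+m,\gamma_{F_{k_j}}(q))$ using the convergence $F_{k_j}\to F_0$ in $\mathscr T^{\omega}$, we obtain $F_0^n(q,\gamma_\infty(q)) = (q+m,\gamma_\infty(q))$ for all $q\in\RR$. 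Thus $\graph(\gamma_\infty)$ is an $(m,n)$-periodic invariant graph of $F_0$, and the uniqueness statement in Proposition \ref{proposition:equivalence_invariance_minimizing} forces $\gamma_\infty = \gamma_{F_0}$. Since every subsequence has a sub-subsequence converging to the same limit $\gamma_{F_0}$, the full sequence $\gamma_{F_k}$ converges uniformly to $\gamma_{F_0}$.

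Finally, writing
\begin{equation*}
|\gamma_{F_k}(q_k) - \gamma_{F_0}(q_0)| \leq \|\gamma_{F_k} - \gamma_{F_0}\|_{\infty} + |\gamma_{F_0}(q_k) - \gamma_{F_0}(q_0)|,
\end{equation*}
the first term tends to $0$ by the uniform convergence just established, and the second by continuity of $\gamma_{F_0}$. This proves joint continuity. There is no real obstacle: the work was done in Lemma \ref{lemma:Banach_closedness}, and the only mild subtlety is justifying that the constants controlling equi-Lipschitzness can be chosen uniformly along the sequence, which follows from the convergence in $\mathscr T^{\omega}$ together with Lemma \ref{lemma:safe_space_boundary}.
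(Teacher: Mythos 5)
Your argument is correct and follows essentially the same route as the paper's proof: equiboundedness and equi-Lipschitz continuity via Lemma \ref{lemma:safe_space_boundary} and Remark \ref{remark:Birkhoff_theorem}, Ascoli--Arzel\`a plus the uniqueness statement of Proposition \ref{proposition:equivalence_invariance_minimizing} to identify every subsequential limit with $\gamma_{F_0}$, hence uniform convergence of the whole sequence. Your final triangle-inequality step making the joint continuity in $(F,q)$ explicit is a harmless elaboration of what the paper leaves implicit.
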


\begin{proof}
Let $(F_k)_k$ be a sequence of maps in $\mathscr T_{m,n}^{\omega}$ converging to a map $F\in \mathscr T^{\omega}_{m,n}$. Using the same notations and arguments as in the proof of Lemma \ref{lemma:Banach_closedness}, for any $k$ the map $\gamma_{F_k}$ is Lipschitz-continuous and we can choose the Lipschitz-constant uniformly in $k$. Hence it has at least one converging subsequence for the uniform topology. Now any converging subsequence $\gamma_{\infty}$ parametrizes an $(m,n)$-periodic invariant graph of $F$ by continuity. Hence by Proposition \ref{proposition:equivalence_invariance_minimizing}, $\gamma_{\infty}=\gamma_{F}$ and all converging subsequences of $(\gamma_{F_k})_k$ have the same limit. This implies that the sequence $(\gamma_{F_k})_k$ converges to $\gamma_{F}$, which concludes the proof.
\end{proof}

\begin{proof}[Proof of  Main Theorem 2 (Theorem \ref{theorem:main_Banach})]
As stated by Lemma \ref{lemma:Banach_closedness}, the set $\mathscr T_{m,n}^{\omega}$ is closed. Let us prove that $\mathscr T_{m,n}^{\omega}$ is locally given by the zeros of an analytic map. Let $F_0\in \mathscr T_{m,n}^{\omega}$ and consider its $(m,n)$-periodic invariant graph $\gamma_{F_0}$. Since the latter has no conjugate points, by the implicit function theorem on analytic maps of Banach spaces (see \cite{Whittlesey}) there exists a neighborhood $V$ of $F_0$ in $\mathscr T^{\omega}$ and an analytic map
$$\overline p:V\times\RR\to\RR$$
such that 
\begin{enumerate}
\item for any $F\in V$, $q\mapsto\overline p_{\Omega}(q)$ is $1$-periodic;
\item $\overline p_{F_0}=\gamma_{F_0}$;
\item for any map $F$ in $V$ close to $F_0$, and any point $(q,p)$ close to $(q,\gamma_{F_0}(p))$, we have
$$\pi_1\circ F^n(q,p)=q+m
\qquad\Leftrightarrow\qquad
p=\overline p_{F}(q).$$
\end{enumerate}

By Lemma \ref{lemma:Banach_continuity_graphs}, its means that we can eventually shrink $V$ and state that $\overline p_{F}=\gamma_{F}$ for $F\in \mathscr T_{m,n}^{\omega}\cap V$. Hence we can state the equivalence
$$\forall F\in V\qquad 
F\in \mathscr T_{m,n}^{\omega}\quad
\Longleftrightarrow
\quad
\forall q\in\RR\quad
\pi_2\circ F^{n}(q,\overline p_{F}(q)) = \overline p_{F}(q).$$
Thus consider the space Banach space $\mathscr C^{}(\RR/\ZZ)$ of continuous $1$-periodic maps on $\RR$, and define the map
$$\varphi:\left\{\begin{array}{rcl}
V&\to&\mathscr C(\RR/\ZZ)\\
F&\mapsto& (q\mapsto \pi_2\circ F^{n}(q,\overline p_{F}(q))-\overline p_{F}(q))
\end{array}
\right.$$
The map $\varphi$ is analytic by construction and satisfies $\varphi^{-1}(0) = \mathscr T_{m,n}^{\omega}\cap V$. Since this is true for any $F_0\in \mathscr T_{m,n}^{\omega}$, this concludes the proof.
\end{proof}

\begin{proof}[Proof of Theorem \ref{theorem:main_Banach_billiard}]
Given a domain $\Omega\in\mathscr D_r$, we can consider the billiard map $F_{\Omega}$ inside $\Omega$. As a consequence of the implicit function theorem for analytic maps of Banach spaces (see \cite{Whittlesey}), the map $F:\mathcal D_r\to \mathscr T^{\omega}$ is an analytic. Moreover, the set of domains in $\mathscr D_r$ having an $(m,n)$-periodic invariant graph is $X_{m,n} := F^{-1}(\mathscr T^{\omega}_{m,n})$. Since $\mathscr T^{\omega}_{m,n}$ is an analytic subset of $\mathscr T^{\omega}$, the set $X_{m,n}$ is an analytic subset of $\mathscr D_r$.
\end{proof}

\appendix
\section{Remarks on billiard dynamics}

In this section we underline the peculiarity of billiard maps by proving properties on their $(m,n)$-periodic graphs, as given in Proposition \ref{proposition:equivalence_invariance_minimizing} for general twist-maps, but in a different way which requires less \textit{machinery}. It relies on the fact that the generating map $h$ of a billiard is the opposite of the distance between consecutive points, and hence if one fixes two points $x$ and $z$, the quantity
$$h(x,y)+h(y,z)$$
is minimized for $y\neq x,z$.

\begin{proposition}
\label{proposition:billiard_periodic_graphs}
Let $f:\TT^1\times(0,1)\to\TT^1\times(0,1)$ be the billiard map written in $(s,-\cos\varphi)$ coordinates and $\Gamma$ be an $(m,n)$-periodic Lipschitz graph $\Gamma$ of $f$. Then: 
\begin{itemize}
\item[{\bf (i)}] $\Gamma$ invariant by $f$;
\item[{\bf (ii)}] the projection of an orbit of $\Gamma$ is a minimal configuration;
\item[{\bf (iii)}] $\Gamma$ is as smooth as $f$ is.
\end{itemize}
\end{proposition}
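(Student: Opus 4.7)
The approach I would take exploits the specific geometric form of the billiard generating function $h(s,s')=-\|\eta(s)-\eta(s')\|$, where $\eta$ is the arc-length parametrization of $\partial\Omega$. The crucial feature highlighted by the hint is that for fixed $x,z$ the function $y\mapsto h(x,y)+h(y,z)$, being minus the polygonal length $\|\eta(x)-\eta(y)\|+\|\eta(y)-\eta(z)\|$, admits a strict interior minimum at a unique $y^*(x,z)$, which is precisely the billiard reflection point enforcing equal angles at $\eta(y^*)$. This provides a rigid local selection rule: along any stationary configuration each interior vertex is uniquely determined by its two neighbors.

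First, for each $s\in\RR$ I would iterate $f$ starting at $(s,\gamma(s))$ to produce the orbit $\{(q_k(s),\sigma_k(s))\}_{k=0}^n$ with $q_0(s)=s$, $q_n(s)=s+m$, and $\sigma_n(s)=\gamma(s)$ (using $(m,n)$-periodicity and the $1$-periodicity of $\gamma$). The projections $(q_0(s),\ldots,q_n(s))$ form a stationary configuration for the action $\mathcal{A}(x_0,\ldots,x_n)=\sum_{k=0}^{n-1}h(x_k,x_{k+1})$ with prescribed endpoints. For part (ii), my plan is to argue minimality via a relaxation argument: given any competitor configuration with the same endpoints, iteratively replacing each interior coordinate by its unique reflection-point minimizer strictly decreases $\mathcal{A}$ by the hint, unless each coordinate is already at its reflection point; combined with compactness from Proposition \ref{proposition:compactness}, this produces a stationary minimizer. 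The main obstacle is to ensure that the \emph{continuous} family $s\mapsto(q_k(s))_k$ coming from $\Gamma$ lies on the global-minimum branch of stationary configurations, and not on a saddle-type (Birkhoff min-max) branch; the expected argument is that in a strictly convex billiard only the minimum branch can be extended to a full continuous graph over the circle, while saddle-type critical configurations arise only in isolated families.

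For part (i), once minimality is in hand I would compare two orbits: one starting at $(s,\gamma(s))$ and one at $(s_1,\gamma(s_1))$ with $s_1=\pi_q\circ f(s,\gamma(s))$. Their projections are both minimizing configurations sharing the endpoints $(s_1,s_1+m)$ (the first shifted by one step, using $1$-periodicity of $\gamma$ and the translation property of $f$). The twist condition $\partial_{12}h\neq 0$ ensures $t_{k+1}$ is uniquely determined by $(t_{k-1},t_k)$ via $\partial_1 h(t_k,t_{k+1})=-\partial_2 h(t_{k-1},t_k)$, so the two minimal configurations coincide term by term; in particular the $\sigma$-coordinate of $f(s,\gamma(s))$ equals $\gamma(s_1)$, yielding $f(\Gamma)\subseteq\Gamma$. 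Part (iii) is then an application of the implicit function theorem to the stationarity system $\partial_2 h(q_{k-1},q_k)+\partial_1 h(q_k,q_{k+1})=0$ for $k=1,\ldots,n-1$: its tridiagonal Jacobian is non-singular by the twist condition, so $(q_1(s),\ldots,q_{n-1}(s))$, and therefore $\gamma(s)=-\partial_1 h(s,q_1(s))$, inherits the regularity of $h$, hence of $f$.
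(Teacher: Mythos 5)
The core of the statement is item \textbf{(ii)}, and this is exactly where your argument has a genuine gap. Your relaxation/compactness step only produces \emph{some} minimizing stationary configuration with prescribed endpoints; it says nothing about the configuration actually coming from $\Gamma$. You acknowledge this (``the main obstacle is to ensure that the continuous family $s\mapsto(q_k(s))_k$ lies on the global-minimum branch''), but the justification you offer --- that only the minimum branch can extend to a continuous graph over the circle, saddle-type critical configurations arising ``only in isolated families'' --- is not an argument: it is essentially a restatement of what has to be proved (that a continuous invariant graph filled with $(m,n)$-periodic points consists of action minimizers is precisely the content of (ii), cf.\ Proposition \ref{proposition:equivalence_invariance_minimizing}(ii)), and no reason is given why a non-minimizing branch could not form such a graph. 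The paper closes this gap with a specific device you are missing: it minimizes the \emph{modified} functional $\mathcal E_{pq}(\underline x)=\sum_k h(x_k,x_{k+1})-\int_{x_p}^{x_q}\phi(u)\,du$ with \emph{free} endpoints over the compact set of ordered configurations (possible because the billiard generating function extends continuously to the closed domain), uses the strict triangle inequality $h(x,y)+h(y,z)<h(x,z)$ only to push the minimizer into the interior, and then observes that the free-endpoint criticality forces the minimizing orbit to start and end \emph{on} $\Gamma$; since the per-period action and the correction term are constant along $\Gamma$, every $\Gamma$-configuration attains the minimum, hence is minimal between indices $pn$ and $qn$, hence minimal. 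Your fixed-endpoint minimization cannot see the graph at all, so no amount of relaxation will identify the $\Gamma$-orbits with the minimizers. (Incidentally, the uniqueness of the interior reflection point $y^*(x,z)$ that you assert is neither obvious nor needed; only the strict inequality above is used.)

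There are also two smaller but real gaps in (i) and (iii), both of which the paper sidesteps by simply invoking Proposition \ref{proposition:equivalence_invariance_minimizing}. For (i), from the fact that two stationary segments share the endpoints $s_1$ and $s_1+m$ you cannot conclude they ``coincide term by term'': the twist condition makes the recursion deterministic given two \emph{consecutive} entries, which you do not have, and fixed-endpoint minimizers need not be unique; making this work requires an Aubry-type crossing/uniqueness argument (or the extension trick in the proof of Proposition \ref{proposition:equivalence_invariance_minimizing}(i)). For (iii), the tridiagonal Jacobian of the stationarity system is \emph{not} non-singular ``by the twist condition'' alone --- a Jacobi matrix with nonzero off-diagonal entries can be singular (already for $n=2$ the diagonal entry $\partial_{22}h+\partial_{11}h$ can vanish); non-degeneracy is the no-conjugate-point property, which follows from minimality (positive semi-definiteness of the Hessian of a minimizer combined with the sign of $\partial_{12}h$), i.e.\ it must be derived from (ii), not from the twist hypothesis.
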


\begin{proof}
Items {\bf (i)} and {\bf (iii)} follow from Proposition \ref{proposition:equivalence_invariance_minimizing}.\\
Let us show item {\bf (ii)}  First let us show that the projection $\underline s = (s_p)_{p\in\ZZ}$ of an orbit $(s_p,y_p)_{p\in \ZZ}$ and $(s_0,y_0)\in\Gamma$ is minimal (here $y$ stands for $-\cos\varphi$).

Let us write $\Gamma = \{(s,\phi(s))\,|\, s\in \RR/L\ZZ\}$ where $\phi:\TT^1\to(0,1)$ is a Lipschitz continuous map and $L$ is the perimeter of the billiard boundary. Denote by $h:D\to\RR$ the generating function of the billiard where $D=\{(s,s')\,|\, s\leq s'\leq s+1\}$. For any $p\leq q$, define
$$\mathcal E_{pq}(x_p,\ldots,x_q) = \sum_{k=p}^{q-1}h(x_k,x_{k+1})-\int_{x_p}^{x_q}\phi(u)du.$$
In the case of the billiard map, $\mathcal E_{pq}$ is well-defined and continuous on the compact set 
$K=\{(x_p,\ldots,x_q)\,|\,x_0\in[0,1],\,\forall k\,\, x_k\leq x_{k+1}\leq x_k+1\}.$ Hence it has a global minimal value which is reached at a certain point $\underline x = (x_p,\ldots,x_q)$. Now by triangular inequality, we have the important fact that
$$\underline x\in\text{int}(K).$$
This property is specific to the billiard map and due to the fact that $h(x,y)+h(y,z)<h(x,x)+h(x,z)$ for any $x<y<z$. Hence $\underline x$ is a critical point of $\mathcal E_{pq}$ and by a classical argument, $\underline x$ is a stationary configuration corresponding to an orbit $(x_j,y_j)_{p\leq j\leq q}$ of the billiard map such that $y_p=\phi(x_p)$ and $y_q=\phi(x_q)$.

It follows that our initial configuration $\underline s$ minimizes each $\mathcal E_{pn,qn}$ among all configurations with the same endpoints $s_{pn}$ and $s_{qn}$, where $p\leq q$. Hence $\underline s$ minimizes the action considered between the indices $pn$ and $qn$, among all configurations with the same endpoints. By another classical argument, $\underline{s}$ is minimal.
\end{proof}

%



\begin{thebibliography}{20}

\bibitem{AlbersTabachnikov}
P. Albers, S. Tabachnikov, Introducing symplectic billiards, \textit{Advances in Mathematics}, Vol 333, 822--867, 2018.

\bibitem{AlbersTabachnikov_Dowker}
P. Albers, S. Tabachnikov, Monotone twist maps and Dowker-type theorems, preprint, 2024. \url{arxiv.org/pdf/2307.01485}

\bibitem{Arnaud}
M.-C. Arnaud, Lower and upper bounds for the Lyapunov exponents of twisting dynamics: a relationship between the exponents and the angle of Oseledets’ splitting, Ergodic Theory Dynam. Systems 33, no. 3, 69–712, 2013.

\bibitem{AMS}
M.-C. Arnaud, J. E. Massetti, A. Sorrentino,   On the fragility of periodic tori for families of symplectic twist maps. Advances in Mathematics, 429: 109175, 2023

\bibitem{Arnold}
V. I. Arnol’d, Proof of a theorem by A.N. Kolmogorov on the invariance of
quasi-periodic motions under small perturbations of the Hamiltonian, \textit{Russian Math. Surveys} 18, 9–36, 1963.

\bibitem{Bialy}
M. Bialy, Convex billiards and a theorem by E. Hopf. Math. Z. 124 1, 147–154, 1993.

\bibitem{BialyOuter}
M. Bialy, Integrable Outer billiards and rigidity, preprint, 2023. \url{https://arxiv.org/abs/2306.12494}

\bibitem{Birkhoff}
G. D. Birkhoff, On the periodic motions of dynamical systems, Acta Math. 50, 359–379, 1927.

\bibitem{Boyland}
P. Boyland, Dual billiards, twist maps and impact oscillators, Nonlinearity 9, 1996.

\bibitem{Douady}
R. Douady, Thèse de Troisième Cycle, Univ. Paris 7, 1982.

\bibitem{ForniMather}
G. Forni, J. N. Mather, Action minimizing orbits in hamiltomian systems, Transition to Chaos in Classical and Quantum Mechanics, Lecture Notes in Mathematics 1589 (1994). 

\bibitem{Genecand}
C. Genecand, Transversal homoclinic points near elliptic fixed points of
area-preserving diffeomorphisms of the plane, Dynamics Reported New Series, Volume 2, Springer, 1–30, 1993.

\bibitem{GlutsyukShustin}
A. Glutsyuk, E. Shustin, On polynomially integrable planar outer billiards and curves with symmetry property, Math. Ann. 372, no. 3–4, 1481–1501, 2018.

\bibitem{Gole}
C. Gol\'e, \textit{Symplectic Twist Maps, Global Variationnal techniques}, Advanced Series in Non-linear dynamics Vol. 18, World Scientific, 2001.

\bibitem{Hammer}
P. C. Hammer, Unsolved Problems, in \textit{Convexity}, Proc. Symp. Pure Math. vol. VII, Am. Math. Soc. 1963.

\bibitem{KH_book}
A. Katok, B. Hasselblatt, \textit{Introduction to the Modern Theory of Dynamical Systems}, Encyclopedia of Mathematics and its Applications, Cambridge University Press, 1995.

\bibitem{KaloshinKoudjinan}
V. Kaloshin, E. C. Koudjinan, Non co-preservation of the $1/2$ and $1/(2l+1)$-rational caustics along deformations of circles, preprint, 2021. \url{https://arxiv.org/abs/2107.03499}

\bibitem{KaloshinSorrentino}
V. Kaloshin, A. Sorrentino, On the local Birkhoff conjecture for convex billiards, Ann. of Math. 188, 315-380 (2018).

\bibitem{KatokHasselblatt}
A. Katok, B. Hasselblatt, \textit{Introduction to the Modern Theory of Dynamical Systems (Encyclopedia of Mathematics and its Applications)}, Cambridge University Press (1995).

\bibitem{KS}
A. Katok, J.-M. Strelcyn, \textit{Invariant Manifolds, Entropy and Billiards. Smooth Maps with Singularities}, Springer-Verlag, 1980.

\bibitem{Kolmogorov}
A. N. Kolmogorov, On the conservation of conditionally periodic motions for a small change in Hamilton’s function, \textit{Dokl. Akad. Nauk SSSR} 98, 527–530, 1954
[Russian]. English translation in \textit{Lectures Notes in Physics} 93, Springer, 1979.

\bibitem{Koval}
I. Koval, Domains which are integrable close to the boundary and close to the circular ones are ellipses, preprint, 2021. \url{https://arxiv.org/abs/2111.12171}

\bibitem{KozlovTreshchev}
V. V. Kozlov, D. V. Treshchev, \textit{Billiards: A Genetic Introduction to the Dynamics of Systems with Impacts}. Translations of Mathematical Monographs, vol. 89. Providence, RI: Amer. Math. Soc., 1991.

\bibitem{Lazutkin_KAM}
V. F. Lazutkin, Existence of caustics for the billiard problem in a convex domain (Russian), Izv. Akad. Nauk SSSR Ser. Mat. 37, 186–216, 1973.

\bibitem{Lazutkin_book}
V. F. Lazutkin, \textit{KAM Theory and Semiclassical Approximations to Eigenfunctions}, Springer-Verlag, 1991.

\bibitem{MarkusMeyer}
L. Markus, K. Meyer, Generic Hamiltonian systems are neither integrable nor ergodic, \textit{Mem. Am. Math. Soc.} 144, 1974.

\bibitem{MarviziMelrose}
S. Marvizi, R. Melrose, Spectral invariants of convex planar regions, \textit{J. Differential Geom.} 17(3), 475--502, 1982.

\bibitem{Moser}
J. Moser, On invariant curves of area preserving mappings of an annulus, \textit{Nachr. Akad. Wiss. Gött.}, \textit{Math. Phys. Kl.}, 1–20, 1962.

\bibitem{MoserBook}
J. Moser, \textit{Stable and Random Motions in Dynamical Systems: With Special Emphasis on Celestial Mechanics}, Princeton University Press, 1973.

\bibitem{Popov}
G. Popov, Invariants of the Length Spectrum and Spectral Invariants of Planar Convex Domains, Commun. Math. Phys. 161, 335–364, 1994.

\bibitem{Poritsky}
H. Poritsky, The billiard ball problem on a table with a convex boundary — an illustrative dynamical problem, \textit{Ann. of Math.} 51, 446–470, 1950.

\bibitem{Poschel}
J. Pöschel, A lecture on the classical KAM theorem, in \textit{Smooth Ergodic Theory and Its Applications}, Proceedings of Symposia in Pure Mathematics 69, Part IId, 2001. 

\bibitem{SapiroTannenbaum}
G. Sapiro, A. Tannenbaum, On Affine Plane Curve Evolution, \textit{Journal of Functional Analysis} 119-1, 79--120, 1994.

\bibitem{Siburg}
K. F. Siburg, The Principle of Least Action in Geometry and Dynamics, Lecture Notes in Mathematics, Springer Berlin, Heidelberg, 2004.

\bibitem{TabachnikovBook}
S. Tabachnikov, Geometry and billiards, Amer. Math. Soc., Providence, RI, 2005.

\bibitem{TabachnikovOuter}
S. Tabachnikov, On algebraically integrable outer billiards, Pacific J. Math. 235, 89–92, 2008.

\bibitem{Whittlesey}
E. F. Whittlesey, Analytic functions in Banach spaces, Proceedings of the American Mathematical Society 16(5), 1077-1083.

\bibitem{Zehnder}
E. Zehnder, Homoclinic points near elliptic fixed points, \textit{Comm. Pure Appl. Math.} 26, 131–182, 1973.







%
\end{thebibliography}
\end{document}